\documentclass[11pt]{amsart}
\usepackage{amsmath,amssymb,amsbsy,amsfonts,amsthm,latexsym,amsopn,amstext,amsxtra,epic, euscript,amscd,indentfirst,}
\usepackage[margin=1.2in]{geometry}
\setlength{\unitlength}{12pt}

\begin{document}

\newtheorem{theorem}{Theorem}
\newtheorem*{theorem*}{Theorem}
\newtheorem{conjecture}[theorem]{Conjecture}
\newtheorem*{conjecture*}{Conjecture}
\newtheorem{proposition}[theorem]{Proposition}
\newtheorem{question}[theorem]{Question}
\newtheorem{lemma}[theorem]{Lemma}
\newtheorem{cor}[theorem]{Corollary}
\newtheorem{obs}[theorem]{Observation}
\newtheorem{proc}[theorem]{Procedure}
\newcommand{\comments}[1]{} 
%% DEFINITIONS
\def\Z{\mathbb Z}
\def\Za{\mathbb Z^\ast}
\def\Fq{{\mathbb F}_q}
\def\R{\mathbb R}
\def\N{\mathbb N}
\def\C{\mathbb C}
\def\k{\kappa}
\def\grad{\nabla}

\title[The Drift Laplacian and Hermitian Geometry]{Eigenvalues of the Complex Laplacian on compact non-K\"ahler manifolds}

\author{Gabriel J. H. Khan}
\email{khan.375@osu.edu}

\date{\today}

\maketitle

\begin{abstract}
Let $(M^n, h)$ be a compact Hermitian manifold. Suppose $\lambda$ is the lowest eigenvalue of the complex Laplacian on $M$. We prove that $\lambda \geq C$ where $C$ depends only on the dimension $n$, the diameter $d$, the Ricci curvature of the Levi-Civita connection on $M$, and a norm, expressed in curvature, that determines how much $M$ fails to be K\"ahler. We first estimate the principal eigenvalue of a drift Laplacian and then study the structure of Hermitian manifolds using recent results due to Yang and Zheng~\cite{YZ}. We combine these results to obtain the main estimate.
\end{abstract}

\section{Introduction}

This preprint's main goal is to obtain a lower bound on the spectrum of the complex Laplacian on a compact Hermitian manifold. To do this, we need two seemingly unrelated results. First, we derive an estimate for the principal eigenvalue of a Laplacian with drift. Second, using recent results from ~\cite{YZ}, we find inequalities which allow us to to estimate the torsion of a Hermitian manifold in terms of the Riemannian and Hermitian curvature. We then note that the complex Laplacian can be viewed as a drift Laplacian in which the drift can be expressed in terms of the torsion. We thus get the desired estimate.

In Section 1, we state and discuss the results. In Section 2, we prove the estimate for the drift Laplacian. In Section 3, we discuss the torsion in greater depth and prove the lemmas in Section 1 that we need to bound the torsion. In the final section, we prove the estimate on the complex Laplacian and discuss some conjectures on the relation between curvature tensors, the torsion tensor, and orthogonal complex structures.

\subsection{The Drift Laplacian}

  The drift Laplacian is a natural operator that appears in physical applications. The associated heat equation has been studied and the drift term acts as convection (i.e. stirring). Often, though not always, stirring speeds up the diffusion process. Therefore, we might expect to be able to derive lower bounds on the spectrum for the drift Laplacian. Theorem 1 provides an example of such bounds.

\begin{theorem}
Let $(M^n, g)$ be a compact Riemannian manifold without boundary, satisfying $Ric~ M \geq -(n-1)k$ and $\xi$ be a one-form on $M$. Suppose $u \in C^\infty(M)$ is a solution to the equation $\Delta u + \xi(\nabla u) = \lambda u$.
 Let $\| \xi \|$ be the $C_0$ norm of $\xi$ and $\| \nabla \xi \|$ be the $C_0$ norm of $\nabla \xi$ (as a two tensor). Let 
$D =  2nd^2$ where $d$ is the diameter of M and $E = \frac{1}{2n}( (16n^2-32n+5) \| \xi \|^2 + 2(n-1)^2 k + 2(n-1) \|\nabla \xi\|)$
Then we have: $$\lambda \geq \frac{1}{D}  \frac{(1 + \sqrt{1+4DE})^2-DE}{\exp(1 + \sqrt{1+4DE})}$$
\end{theorem}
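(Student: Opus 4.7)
The approach is a Bochner-based gradient estimate in the spirit of Li--Yau, adapted to accommodate the drift term $\xi(\nabla u)$. The plan has three main stages: derive a pointwise gradient bound on $u$ from a Bochner argument, integrate along a minimizing geodesic joining the extrema of $u$ to relate $\lambda$ to the diameter $d$, and then solve the resulting transcendental inequality to extract the stated lower bound.

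After normalizing so that $\sup u = 1$ and shifting the equation as needed, I would consider an auxiliary function of the form $\Phi = |\nabla u|^2 + F(u)$, where $F$ is to be chosen later. The starting identity is the Bochner formula
\begin{equation*}
\tfrac{1}{2}\Delta|\nabla u|^2 = |\nabla^2 u|^2 + \langle \nabla u, \nabla \Delta u\rangle + \mathrm{Ric}(\nabla u, \nabla u).
\end{equation*}
Substituting the eigenfunction equation $\Delta u = \lambda u - \xi(\nabla u)$ and differentiating produces the additional terms $(\nabla \xi)(\nabla u,\nabla u)$ and $\xi(\nabla_{\nabla u}\nabla u)$. The first is bounded directly by $\|\nabla\xi\|\cdot|\nabla u|^2$, explaining the $\|\nabla\xi\|$ contribution to $E$. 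The second couples first and second derivatives of $u$, and I would split it via Young's inequality, absorbing a piece into $|\nabla^2 u|^2$ and then using the trace bound $|\nabla^2 u|^2 \geq (\Delta u)^2/n$ with $\Delta u$ replaced by $\lambda u - \xi(\nabla u)$. The balance needed to reproduce the coefficient $(16n^2-32n+5)/(2n)$ for $\|\xi\|^2$ in $E$ fixes the Young parameter. Evaluating $\Phi$ at its interior maximum, where $\nabla\Phi = 0$ and $\Delta\Phi \leq 0$, then yields, with a suitable choice of $F$, a pointwise inequality on $|\nabla u|^2$ involving $\lambda$, $u$, and $E$.

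For the integration stage, I would feed this gradient estimate into $\int du/|\nabla u|$ along a minimizing geodesic between points where $u$ attains its extrema. Combined with the diameter $d$, this produces a relation between $\lambda$, $D=2nd^2$, and $E$. The occurrence of $(1+\sqrt{1+4DE})^2 - DE$ in the numerator strongly suggests that the final step reduces to solving a quadratic inequality in the auxiliary variable $\sqrt{1+4DE}$, while the factor $\exp(1+\sqrt{1+4DE})$ in the denominator should emerge either from a Gronwall-style log-derivative step within the gradient estimate or from an exponential substitution used to linearize the geodesic integration.

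The main obstacle I anticipate is the sharp handling of the mixed term $\xi(\nabla_{\nabla u}\nabla u)$: the exact coefficients appearing in $E$ are sensitive to how one balances Young's inequality against the Kato-type trace bound, and a suboptimal split will not reproduce the coefficient $16n^2-32n+5$. A secondary, more algebraic, difficulty lies in combining all contributions into the precise transcendental form stated, which requires careful bookkeeping of the many constants generated along the way.
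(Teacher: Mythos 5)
Your high-level plan (Bochner identity at a maximum point, geodesic integration against the diameter, then solving a transcendental inequality) is the right skeleton, and you correctly anticipate where the $\|\nabla\xi\|$ and $\|\xi\|^2$ contributions enter. But there is a genuine gap in the middle of the argument: you never identify the specific auxiliary quantity that makes the closed form
$\frac{1}{D}\frac{(1+\sqrt{1+4DE})^2 - DE}{\exp(1+\sqrt{1+4DE})}$
actually come out.

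The paper works not with $\Phi = |\nabla u|^2 + F(u)$ but with the ratio
$G = \frac{|\nabla u|^2}{(\beta-u)^2}$
for a free parameter $\beta > 1$ (after normalizing $\sup u = 1$). This choice does two things your outline does not account for. First, at the interior maximum of $G$ in a frame with $u_1 = |\nabla u|$, the condition $\nabla G = 0$ forces $u_{11} = -\frac{|\nabla u|^2}{\beta - u}$ and $u_{1j}=0$ for $j>1$; after plugging in Bochner, the lower bound $\sum_{i,j\geq 2}u_{ij}^2 \geq \frac{1}{n-1}(\Delta u - u_{11})^2$ (not the full trace bound $|\nabla^2 u|^2 \geq (\Delta u)^2/n$ you propose) and division by $(\beta-u)^2$ collapse everything into a quartic inequality $g^4 - Ag^3 - Bg^2 - C \leq 0$ in $g=\sqrt G$, whose roots one bounds by $g \leq 2\sqrt{A^2 + B + \sqrt C}$. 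Second, and more importantly, integrating the resulting gradient estimate $|\nabla u| \leq (\beta-u)\sqrt{\cdots}$ along a minimizing geodesic between a zero and a maximum of $u$ yields $\log\frac{\beta}{\beta-1}$ on the left, i.e. a logarithm of the free parameter. Setting $x = \frac{\beta}{\beta-1}$ gives $\lambda \geq \frac{(\log x)^2}{Dx} - \frac{E}{x}$, and the $\exp(1+\sqrt{1+4DE})$ in the stated bound is simply $x$ evaluated at the critical point $\log x = 1 + \sqrt{1+4DE}$ of this one-variable function. Your guess that the exponential ``should emerge from a Gronwall step or an exponential substitution'' misses this mechanism entirely: it is an ordinary calculus optimization over the free parameter $\beta$, and without the ratio form of $G$ (which puts the $\log$ into the integrated inequality in the first place) there is nothing to optimize. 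To repair the proposal you would need to (i) replace $\Phi$ with $G$, (ii) keep $u_{11}$ separate from the trace rather than using $(\Delta u)^2/n$, (iii) handle the quartic in $\sqrt G$ rather than a quadratic in $\lambda$, and (iv) add the final optimization over $\beta$.
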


 Similar (and sharper) estimates have been obtained in the case of the Witten-Laplacian, where $\xi = df$ for some smooth function $f$, such as in ~\cite{AN} ~\cite{FLL}. For our purposes, $\xi$ will generally not be exact so we cannot use these results. The one-form is exact if and only if the metric is conformal to a balanced metric, which is a very restrictive condition.
  We would not be surprised if Theorem 1 were already known but we have not been able to find it in our literature search. In \cite{GN}, Gonzalez and Negrin study the kernel of the drift Laplacian on open domains with the same conditions that we use. More recently, Jorgen Jost and others have studied harmonic maps for a generalization of this operator (so called V-Harmonic maps). This group has proven various results and advanced the theory of harmonic maps ~\cite{CJQ}.

\subsection{Structural inequalities on Hermitian manifolds}
 In Section 3, we use recent results from ~\cite{YZ} to derive inequalities that estimate the torsion of a Hermitian manifold in terms of the Riemannian and Hermitian curvature. We define two norms that measure the difference between Hermitian and Riemannian curvature, denoted $R^h$ and $R$, respectively.

  Given a unitary frame $\{e_i\}$ on a Hermitian manifold, we define  $\| R^h-R\|^2_\star$ and  $\| R^h-R \|^2_{\star \star}$ in the following way:
  $$\| R^h-R \|^2_\star = \sum_{i,j,k,l}| R^h_{i \bar j k \bar l} - R_{i \bar j k \bar l} |^2 + 2\sum_{i,j,k,l}| R_{i j \bar k \bar l} |^2 $$
 $$\| R^h-R \|^2_{\star \star} = \sum_{i,j,k,l} | R_{i j k \bar l} |^2 $$

 We show that these quantities dominate the $C^0$ and $C^1$ norm of the torsion. To be precise, let $\nabla^{c'}$ and $\nabla^{c''}$ be the $(1,0)$ and $(0,1)$ components of the covariant differentiation of the Chern connection defined by:

$$\nabla^{c'}_{X+\overline{Y}} T = \nabla^c_X T \textrm{ and }$$
$$\nabla^{c''}_{X+\overline{Y}} T = \nabla^c_{\overline{Y}} T $$

where $X$ and $Y$ are any complex tangent vectors on M of type $(1,0).$
  
  \begin{theorem}
 The following inequalities hold pointwise:
$$||T||^2 \leq \| R^h-R \|_\star \textrm{ and } ||\eta||^2 \leq  \| R^h-R \|_\star$$
$$\| \nabla^{c'} (T)\| \leq  \| R^h-R \|_\star$$
$$\| \nabla^{c''} T\| \leq C(n) \| R^h-R \|_\star + \| R^h-R \|_{\star \star}$$
\end{theorem}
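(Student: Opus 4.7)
The strategy is to invoke the algebraic Bianchi-type identities of Yang and Zheng \cite{YZ}, which on any Hermitian manifold relate the difference $R^h - R$ and the non-K\"ahler-type Riemannian curvature components $R_{ij\bar k\bar l}$ and $R_{ijk\bar l}$ directly to the torsion $T$ of the Chern connection and its covariant derivatives. These identities encode the failure of the Riemannian curvature to exhibit K\"ahler symmetry entirely in terms of $T$, $\bar T$, $\nabla^c T$, and $\nabla^c \bar T$, so each of the four claimed inequalities should reduce to a single structural identity followed by Cauchy--Schwarz.

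The first step is to record the three Yang--Zheng structure formulas needed: one expressing $R^h_{i\bar j k \bar l} - R_{i\bar j k \bar l}$ as a quadratic polynomial in $T$; one expressing $R_{i j \bar k \bar l}$ as $\nabla^{c'} T$ plus a quadratic term in $T$; and one expressing $R_{i j k \bar l}$ as $\nabla^{c''} T$ plus quadratic terms in $T$. The first two of these identities assemble into the norm $\|R^h - R\|_\star$, and the third into $\|R^h - R\|_{\star\star}$, which is why those are the right quantities to appear on the right-hand sides.

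For the pointwise bounds on $\|T\|^2$ and $\|\eta\|^2$ I would solve the first identity schematically as $T \cdot \overline T \sim R^h - R$, contract against a unitary frame, and apply Cauchy--Schwarz to bound $\|T\|^2$ by a sum of squared components of $R^h - R$ together with $R_{ij\bar k\bar l}$; since $\eta$ is a contraction of $T$, the same estimate descends to $\|\eta\|^2$. To obtain $\|\nabla^{c'} T\| \leq \|R^h - R\|_\star$ I would isolate the $(1,0)$-derivative from the $R_{ij\bar k\bar l}$ identity and absorb the quadratic-in-$T$ remainder using the bound just established. For $\|\nabla^{c''} T\|$ the $R_{ijk\bar l}$ identity contributes a principal term of size $\|R^h - R\|_{\star\star}$, and the quadratic-in-$T$ residue yields the $C(n)\|R^h - R\|_\star$ summand after reapplying Step 1.

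The main obstacle is combinatorial rather than analytic. The Yang--Zheng identities carry many index permutations and small rational coefficients, and one must verify that after symmetrization the torsion or $\nabla T$ norm on the left genuinely dominates each term on the right rather than producing uncontrolled cross terms. A related subtlety is verifying that $\nabla^{c'} T$ never leaks into the formula for $R_{ijk\bar l}$, and vice versa; otherwise the clean separation between $\|R^h - R\|_\star$ and $\|R^h - R\|_{\star\star}$ in the statement would collapse and each inequality would pick up the other norm as a penalty.
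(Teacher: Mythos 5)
The strategy you describe is broadly the right one—assemble the Yang--Zheng structural identities from \cite{YZ} and read off Cauchy--Schwarz estimates—and for the first displayed inequality it is close to what the paper does. However, your assignment of the two bianchi-type identities to the two derivative types is backwards, and this is not a cosmetic issue: it is the entire content of why one of the estimates needs the $\|\cdot\|_{\star\star}$ term and the other does not.

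In Lemma~7 of \cite{YZ}, the component $T^k_{ij,\bar l}$ (a derivative along a $(0,1)$ direction, i.e.\ a component of $\nabla^{c''}T$) is expressed purely in terms of Hermitian curvature, while $T^l_{ij,k}$ (a derivative along a $(1,0)$ direction, i.e.\ $\nabla^{c'}T$) is expressed as $R_{ijk\bar l}$ plus terms quadratic in $T$. It is the $R_{ij\bar k\bar l}$ identity that carries the $\nabla^{c''}T$ terms, and the $R_{ijk\bar l}$ identity that carries the $\nabla^{c'}T$ term. You wrote the opposite. As a result the step ``isolate the $(1,0)$-derivative from the $R_{ij\bar k\bar l}$ identity'' cannot be carried out: that identity does not contain a $(1,0)$-derivative of $T$ to isolate. (This confusion appears to mirror a transposition in the paper's own statement of Theorem~2; the proofs given in Lemmas~8 and~9 agree with the Yang--Zheng formulas and put the $\|\cdot\|_{\star\star}$ penalty on $\nabla^{c'}T$, not $\nabla^{c''}T$.) The paper's route to the $\nabla^{c''}T$ bound is also different from what you sketch: rather than starting from the $R_{ij\bar k\bar l}$ identity, one starts from the pure-Hermitian formula $2T^k_{ij,\bar l}=R^h_{j\bar l i\bar k}-R^h_{i\bar l j\bar k}$, subtracts and adds the corresponding Riemannian components, and invokes the first Bianchi identity to convert $R_{i\bar j k\bar l}-R_{k\bar j i\bar l}$ into $R_{ik\bar j\bar l}$, which lives inside $\|R^h-R\|_\star$. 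Your sketch misses this add-and-subtract step entirely.

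One more substantive gap: for the $\|T\|^2$ and $\|\eta\|^2$ bounds, ``contract against a unitary frame and apply Cauchy--Schwarz'' is not enough. The Yang--Zheng equation~41 gives a quantity equal to $\sum_k |T_{XY}^k|^2 + 2\operatorname{Re}(T^Y_{kY}\overline{T^X_{kX}})$, and the cross term has no favorable sign a priori. The key observation in the paper's proof is that summing the cross term over a unitary frame produces exactly $2|\eta_k|^2\geq 0$, so the frame-sum of the left side equals $\|T\|^2+2\|\eta\|^2$ on the nose. Without that structural cancellation, Cauchy--Schwarz alone would leave you unable to separate $\|T\|^2$ from a possibly-negative correction.
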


Here, $\eta$ is Gauduchon's torsion one-form, defined by $\partial \omega^{n-1} = -2 \eta \wedge \omega^{n-1}$, where $\omega$ is the K\"ahler (metric) form of the metric h. Given a unitary frame $\{e_i\}$, we can also define $\eta$ as $\eta_i = \sum_j T_{ij}^j$.
 
 The torsion expresses the difference between the Levi-Civita connection and the Hermitian connection. Therefore, given a unit vector $X$ of type (1,0), the difference between $\nabla^{c''}_X T$ and $ \nabla_{\bar X} T$ can be bounded by a quadratic expression in torsion. We can bound the difference between $ \nabla^{c'}_X T$ and $ \nabla_X T$ in the same way. Using these observations, we obtain Theorem 3.
 
 \begin{theorem}
Let $T$ be the torsion tensor and $\nabla T$ the derivative of the torsion tensor with respect to the Levi-Civita connection. Then there exists $C^\prime(n)$ so that following inequality holds:
$$||\nabla T|| \leq   C^\prime(n) \| R^h-R \|_\star + \| R^h -R \|_{\star \star} $$
\end{theorem}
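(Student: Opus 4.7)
The plan is to bootstrap directly from Theorem~2 by converting the Chern--connection derivatives $\nabla^{c'} T$ and $\nabla^{c''} T$ into the Levi--Civita derivative $\nabla T$. The Levi--Civita and Chern connections share the same underlying metric, so their difference is a tensor: writing $A = \nabla - \nabla^c$, one has at each point $A \in \Gamma(\mathrm{End}(TM) \otimes T^*M)$, and a standard computation expresses the components of $A$ as linear combinations of components of the Chern torsion $T$ (this is the usual contorsion formula, written in complex-linear/conjugate-linear pieces relative to the splitting $TM \otimes \C = T^{1,0} \oplus T^{0,1}$). In particular, $\|A\|$ is pointwise controlled by a dimensional multiple of $\|T\|$.

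Next, I would decompose $\nabla T$ according to the type of the differentiating direction: for a $(1,0)$ vector $X$ write $\nabla_X T - \nabla^{c'}_X T = A(X) \ast T$, and for a $(0,1)$ vector $\bar X$ write $\nabla_{\bar X} T - \nabla^{c''}_{\bar X} T = A(\bar X) \ast T$, where $\ast$ denotes the natural action of the endomorphism-valued one-form $A$ on the tensor $T$. In either case the right-hand side is pointwise bounded by $C(n)\|A\|\,\|T\| \le C(n)\|T\|^2$. Invoking the first inequality of Theorem~2, $\|T\|^2 \le \|R^h-R\|_\star$, we obtain
\[
\|\nabla T - \nabla^{c'}T\oplus \nabla^{c''}T\| \;\le\; C_1(n)\,\|R^h-R\|_\star.
\]

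Finally, the triangle inequality gives
\[
\|\nabla T\| \;\le\; \|\nabla^{c'}T\| + \|\nabla^{c''}T\| + C_1(n)\,\|R^h-R\|_\star,
\]
and plugging in the last two bounds from Theorem~2 yields
\[
\|\nabla T\| \;\le\; \bigl(1 + C(n) + C_1(n)\bigr)\|R^h-R\|_\star + \|R^h-R\|_{\star\star},
\]
which is the desired inequality with $C'(n) := 1 + C(n) + C_1(n)$.

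The main obstacle is bookkeeping rather than genuine analysis: one has to identify the precise contorsion formula $A = \nabla - \nabla^c$ in a unitary frame, verify that the components mixing $(1,0)$ and $(0,1)$ directions are all bilinear in the Chern torsion (so that no stray terms involving $\nabla T$ appear on the right-hand side of the connection-difference identity), and track the dimensional constants carefully so that the coefficient of $\|R^h-R\|_{\star\star}$ remains exactly $1$, matching the shape of Theorem~2's fourth inequality.
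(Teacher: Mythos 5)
Your proposal is correct and follows essentially the same route as the paper, which only sketches this argument: the paper notes that the Levi--Civita and Chern covariant derivatives of $T$ differ by an expression quadratic in $T$ and then cites Theorem~2 to conclude, exactly as you do with the contorsion tensor $A=\nabla-\nabla^c$ and the triangle inequality. Your write-up is in fact a more complete account of what the paper leaves as ``a straightforward computation,'' and your observation that the coefficient of $\|R^h-R\|_{\star\star}$ stays $1$ because the contorsion correction is purely quadratic in $T$ (hence absorbed into the $\|R^h-R\|_\star$ term) is the right reason the final constant has the shape claimed.
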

 
 \subsection{The Complex Laplacian}
 
  Using the observation that the complex Laplacian on a Hermitian manifold can be expressed as a Laplacian with drift, we translate our estimate on the eigenvalue on the drift Laplacian into an estimate on the Laplacian on a Hermitian manifold. 
  
\begin{theorem}
Suppose that $(M^n, h)$ is a compact, Hermitian manifold. Then there exists a uniform $C >0$ such that:
$$\lambda \geq \frac{1}{4n}  \frac{\left( \frac{2}{d^2} + 3Cn^2 (k+ \| R-R^h \|_\star + \| R-R^h \|_{\star \star} ) \right)}{\exp \left( 1 + \sqrt{1+4 Cn^2 d^2 (k+\| R-R^h \|_\star + \| R-R^h \|_{\star \star}) } \right) }$$
\end{theorem}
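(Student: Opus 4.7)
The plan is to identify the complex Laplacian on a Hermitian manifold with a drift Laplacian in the sense of Theorem 1, and then apply the torsion estimates of Theorems 2 and 3 to translate the bound into the statement of Theorem 4.

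First, I would establish the identity $\Delta^c u = c_n\bigl(\Delta u + \xi(\nabla u)\bigr)$ where $\Delta$ is the Levi--Civita Laplace--Beltrami operator, $c_n$ is a dimensional constant, and $\xi$ is a real one-form obtained from the real and imaginary parts of Gauduchon's torsion one-form $\eta$. This is the observation already flagged in the introduction: in a local unitary frame, the difference between the Chern and Levi--Civita divergences is exactly a contraction of the Chern torsion components, i.e.\ the trace $\eta_i = \sum_j T^j_{ij}$. Thus, any eigenfunction $u$ of $\Delta^c$ with eigenvalue $\lambda$ is also an eigenfunction of $\Delta + \xi(\nabla \cdot)$, with an eigenvalue related to $\lambda$ by $c_n$, so Theorem 1 applies.

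Second, I would estimate $\|\xi\|_{C^0}$ and $\|\nabla \xi\|_{C^0}$ in terms of the two curvature norms. Since $\xi$ is pointwise bounded by a dimensional multiple of $\|\eta\|$, Theorem 2 gives $\|\xi\|^2 \leq C_n\|R^h - R\|_\star$. For the covariant derivative, one writes $\nabla\xi$ (Levi--Civita) as a combination of traces of $\nabla^{c'}T$ and $\nabla^{c''}T$ together with quadratic correction terms in $T$ coming from the connection difference -- exactly the passage from Theorem 2 to Theorem 3. Applying Theorem 3 (together with the pointwise bound on $T$ from Theorem 2) gives $\|\nabla \xi\| \leq C'_n\bigl(\|R^h - R\|_\star + \|R^h - R\|_{\star\star}\bigr)$.

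Third, I would plug these bounds into the definition
\[
E = \tfrac{1}{2n}\bigl((16n^2 - 32n + 5)\|\xi\|^2 + 2(n-1)^2 k + 2(n-1)\|\nabla\xi\|\bigr).
\]
Each of $\|\xi\|^2$, $k$, and $\|\nabla\xi\|$ is controlled by a dimensional multiple of $k + \|R^h - R\|_\star + \|R^h - R\|_{\star\star}$, so $E \leq \tfrac{1}{2}C\,n\bigl(k + \|R^h - R\|_\star + \|R^h - R\|_{\star\star}\bigr)$ for a suitable $C = C(n)$. With $D = 2nd^2$ this yields $4DE \leq 4Cn^2 d^2\bigl(k + \|R^h - R\|_\star + \|R^h - R\|_{\star\star}\bigr)$, matching the exponent in Theorem 4.

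Finally, I would simplify the numerator of Theorem 1. Setting $s = \sqrt{1+4DE} \geq 1$, one has $(1+s)^2 - DE = 2 + 2s + 3DE \geq 2 + 3DE$. Dividing by $D = 2nd^2$ and absorbing the factor $c_n$ relating $\Delta^c$ to $\Delta + \xi(\nabla\cdot)$ into the leading $\tfrac{1}{4n}$, and using monotonicity of the exponential denominator in the upper bound on $DE$, one recovers the displayed bound in Theorem 4. The main obstacle is the first step: writing $\Delta^c$ explicitly as $\Delta + \xi(\nabla \cdot)$ with $\xi$ a multiple of (the real form of) $\eta$, and verifying that the $C^0$ and $C^1$ norms of $\xi$ are genuinely controlled by the torsion quantities estimated in Theorems 2 and 3. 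Once this identification is made, the rest is direct substitution and algebraic simplification.
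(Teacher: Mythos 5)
Your proposal follows the same route as the paper: identify $\square = \tfrac12\Delta + \xi(\nabla\cdot)$ with $\eta + \bar\eta = -2\xi$, bound $\|\xi\|$ and $\|\nabla\xi\|$ via Theorems 2 and 3, and plug into Theorem 1 together with the algebraic simplification $(1+s)^2 - DE = 2 + 2s + 3DE \geq 2 + 3DE$. The only small correction is that the constant relating $\square$ to $\Delta + 2\xi(\nabla\cdot)$ is exactly $\tfrac12$ (not dimension-dependent), which, as you anticipated, is what produces the leading $\tfrac{1}{4nd^2}$ from the $\tfrac{1}{D} = \tfrac{1}{2nd^2}$ of Theorem 1.
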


This estimate is unsightly, but only involves the dimension, the diameter, the Ricci curvature, and the norms we defined earlier. Furthermore, the estimate scales as expected. Spectral geometry of Hermitian manifolds has been studied \cite{JP} \cite{PG}, especially in the context of finding spectral conditions which ensure a Hermitian manifold is balanced or K\"ahler. Our results suggest that one can understand the spectral geometry of Hermitian manifolds by studying the torsion. In a future preprint, we will try to strengthen these estimates and prove other results in this vein. We put forth the following conjecture that this estimate can be improved to only involve the Riemannian curvature tensor.

\begin{conjecture}{}
 Given a compact Hermitian manifold $(M^n, h)$, there exists $C$ depending only on the Riemannian geometry such that if $\square u = \lambda u$ then $\lambda \geq C$.
 \end{conjecture}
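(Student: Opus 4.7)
The plan is to remove the $\|R-R^h\|_\star$ and $\|R-R^h\|_{\star\star}$ contributions from the bound in Theorem 4 and replace them by quantities depending only on the Riemannian curvature. Since the only way these norms enter Theorem 4 is through the drift term $\xi$ (coming from Gauduchon's torsion one-form) and through the Ricci-type correction $E$ in Theorem 1, the conjecture reduces to producing pointwise bounds on $\|T\|$ and $\|\nabla T\|$ in terms of the Riemannian curvature tensor $R$, the dimension, and the diameter.

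My first step would be to write $\nabla^c = \nabla + A$, where $A$ is algebraic in the torsion $T$, and extract the identity $R^h - R = dA + A \wedge A$. Combined with the Yang--Zheng decomposition from \cite{YZ} used in Section 3, this expresses $\|R-R^h\|_\star^2$ and $\|R-R^h\|_{\star\star}^2$ as polynomials in $T$ and $\nabla T$. Once $\|T\|$ and $\|\nabla T\|$ are controlled by Riemannian data, these norms can in turn be absorbed into the same Riemannian data, and the conjecture follows directly from Theorem 4.

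The technical core is therefore a Bochner-type estimate for the torsion and for $\eta$. I would differentiate the defining identity $\partial \omega^{n-1} = -2 \eta \wedge \omega^{n-1}$, producing an elliptic equation for $\eta$ whose curvature term I would then attempt to rewrite purely in terms of the Levi-Civita curvature using integrability of $J$ (i.e., vanishing of the Nijenhuis tensor). An application of the maximum principle to $|\eta|^2$, or more robustly to $|\eta|^2 + \varepsilon \|T\|^2$, should then produce the desired $C^0$ bound. A parallel Weitzenb\"ock argument applied to $T$ itself should yield the corresponding $C^1$ bound, after which Theorem 3 converts everything back into Levi-Civita derivatives.

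The main obstacle I anticipate is that the curvature terms appearing in the natural Bochner formulas are most conveniently expressed in the Hermitian curvature $R^h$, not in $R$, so converting between the two risks reintroducing the very norms we are trying to eliminate. Success or failure of this strategy comes down to whether one can exhibit enough algebraic identities, beyond those in \cite{YZ}, to close the estimate. If such identities are not available, a weaker fallback would be to establish the conjecture under an a priori compactness hypothesis on the space of orthogonal complex structures compatible with a given Riemannian metric, which is itself an interesting question about the moduli of Hermitian structures on a fixed Riemannian manifold.
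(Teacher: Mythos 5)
This statement is presented in the paper as an open conjecture (Conjecture~5), not a theorem, so there is no paper proof to compare against. The paper explicitly remarks that it has \emph{not} been able to establish it in general and proves it only in two special cases: globally conformally flat Hermitian metrics, and $k$-Gauduchon metrics with $k>\frac{n}{2}$. Your proposal is therefore a research strategy for an open problem rather than a proof.

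There is a concrete obstruction to your central step. You propose to close the argument by bounding $\|T\|$ and $\|\nabla T\|$ pointwise by Riemannian data and then feeding these into Theorem~4. The paper rules this out explicitly: there exist Riemannian-flat Hermitian metrics on tori with nonvanishing torsion (it cites \cite{BSV}). On a flat manifold the entire Riemannian curvature tensor vanishes, so no pointwise bound on $\|T\|$ by Riemannian curvature can hold. Any Bochner or Weitzenb\"ock identity you set up for $T$ will necessarily carry a curvature term living in $R^h$, not $R$, and the flat examples show it cannot be converted away; this is exactly the obstacle you flag at the end, and it is fatal to the $T$-based route rather than a technical difficulty. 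The quantity the conjecture actually needs is the torsion one-form $\eta$ --- equivalently the Lee form $\xi = -\frac12(\eta+\bar\eta)$ --- since the drift in $\square = \frac12\Delta + \xi(\nabla)$ sees only $\eta$, not the full torsion tensor. The flat-tori obstruction does not apply to $\eta$: integrating the paper's identity $R_{ij\overline{ij}} = \eta_{i,\bar i} + |\eta_i|^2$ over a Riemannian-flat manifold forces $\eta\equiv 0$, so those examples are balanced. The paper's own reduction of Conjecture~5 is to Conjecture~6, a $C^1$ bound on $\eta$ in terms of Riemannian data; it already obtains an $L^2$ bound on $\eta$ from the Riemannian curvature, and the open part is the $C^1$ upgrade. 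You should reorient your Bochner argument around $\eta$ rather than $T$. Finally, what you label a ``fallback'' --- a compactness hypothesis on the space of compatible complex structures --- is essentially the mechanism behind the paper's Theorem~6: a contradiction argument extracting weak limits $\eta_\infty$, $u_\infty$ from a sequence with $\lambda_i\to 0$, driven by an a priori $L^\infty$ bound on $\eta_i$ coming from the $k$-Gauduchon condition. In the paper's scheme that route is the live one, not the fallback.
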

  This would mirror the case for the Laplace-Beltrami operator, where an estimate exists in terms of the dimension, diameter, and Ricci curvature. In order to do this, one generally tries to obtain some estimate on the torsion one-form.
We can show that there are certain curvature conditions which force $\eta$ to vanish, but we have not been able to establish this more generally. However, we can prove Conjecture 5 in several special cases.

\begin{theorem*}
Let $(M^n, g)$ be a compact globally conformally flat Hermitian manifold. Let $K= \inf_{x \in M} Ric~ M$, $k= \sup_{x \in M} Ric~ M$ $R$ be the scalar curvature of $M$, $d$ be the diameter of $M$ and $i$ be the injectivity radius of $M$. If $\lambda_1$ is the principle eigenvalue of the complex Laplacian $\square$, then we have the following estimate:
$$\lambda \geq C(d, K, k, n, | \nabla R|, R^2, i)$$
\end{theorem*}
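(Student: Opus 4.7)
The plan is to reduce the claim to Theorem 4 by showing that on a globally conformally flat Hermitian manifold, the Chern--Riemannian curvature differences $\|R^h - R\|_\star$ and $\|R^h - R\|_{\star\star}$ are controlled by the listed Riemannian data.

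First, write $g = e^{2\varphi} g_0$ with $g_0$ a flat metric. Since $g$ is Hermitian, $g_0 = e^{-2\varphi} g$ is Hermitian with respect to the same complex structure $J$, and the associated fundamental two-forms satisfy $\omega = e^{2\varphi}\omega_0$. Hence $d\omega$, the torsion tensor $T$, and Gauduchon's one-form $\eta$ are determined by the corresponding data of $g_0$ together with $d\varphi$. Working in a local $g_0$-parallel frame one can express $T$, $\eta$, and $\nabla T$ as polynomial expressions in $d\varphi$, $\nabla^{g_0} d\varphi$, and algebraic data coming from $J$ (which is parallel for $g_0$ up to a controlled defect). In particular the pointwise norm of $T$ is comparable to $|d\varphi|$, and that of $\nabla T$ to $|\nabla^{g_0} d\varphi|$ plus a quadratic term in $|d\varphi|$.

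Next, invert the standard conformal change formula
\[
\operatorname{Ric}(g) = -(n-2)(\nabla^{g_0})^2 \varphi + (n-2)\, d\varphi \otimes d\varphi - \bigl(\Delta_{g_0}\varphi + (n-2)|d\varphi|_{g_0}^2\bigr) g_0
\]
to recover $\nabla^{g_0} d\varphi$ and $d\varphi \otimes d\varphi$ algebraically from $\operatorname{Ric}(g)$. Tracing gives a Yamabe-type equation for $\varphi$ in terms of $R$ and $|\nabla^{g_0} \varphi|^2_{g_0}$; a Cheng--Yau style gradient estimate, where $i$ and $d$ enter through elliptic regularity on coordinate balls of radius comparable to the injectivity radius, yields a pointwise bound on $|d\varphi|$ in terms of $K$, $k$, $R^2$, $d$, and $i$, and hence bounds on $|T|$ and $|\eta|$. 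Differentiating once more and invoking the second Bianchi identity converts a bound on $|\nabla \operatorname{Ric}|$ into a bound on $|(\nabla^{g_0})^2 d\varphi|$, and thus on $|\nabla T|$, involving $|\nabla R|$. Combining, one obtains $\|R^h - R\|_\star$ and $\|R^h - R\|_{\star\star}$ controlled by the stated Riemannian data, after which Theorem 4 produces the desired lower bound on $\lambda$.

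The main obstacle lies in promoting the algebraic pointwise relations into uniform global bounds: $\varphi$ is only defined up to an additive constant, so one must normalize and exploit compactness, together with $i$, to extract a uniform Cheng--Yau gradient estimate from the Yamabe-type equation. A secondary subtlety is that $\|R^h - R\|_{\star\star}$ is the highest-derivative term in the estimate, involving $\nabla^{c''} T$; isolating its dependence on $|\nabla R|$ alone, rather than on still higher derivatives of $\varphi$, requires a careful comparison of the Chern and Levi-Civita Bianchi identities for $R^h$ and $R$ in the conformally flat setting so that the difference $\nabla R^h - \nabla R$ is controlled by lower-order data already dominated by the previous step.
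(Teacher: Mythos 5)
Your proposal tries to reduce Theorem 6 to Theorem 4 by bounding $\|R^h - R\|_\star$ and $\|R^h - R\|_{\star\star}$ in terms of the listed Riemannian data. This is a natural first instinct, but it is exactly the route the paper avoids, and for a good reason: those two norms control the \emph{full} torsion tensor $T$ (via Theorem 2), and the full torsion is not determined by the Riemannian geometry. The paper says this explicitly later on: ``we cannot hope to control the entire torsion tensor solely using the Riemannian curvature because there exists Riemannian flat metrics that do not have vanishing torsion.'' The concrete gap in your write-up is the parenthetical claim that $J$ is ``parallel for $g_0$ up to a controlled defect.'' The defect is $\nabla^{g_0}J$, which on a flat torus is precisely the torsion $T_0$ of the flat Hermitian metric; as $J$ ranges over the moduli of orthogonal complex structures on a fixed flat torus, $\nabla^{g_0}J$ is unbounded by the (fixed, flat) Riemannian data. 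Consequently your next claim, that $|T|$ is comparable to $|d\varphi|$, fails: $T = T_0 + (\text{terms in } d\varphi)$ and $T_0$ carries a primitive component independent of the conformal factor. So the chain ``conformal factor bounds $\Rightarrow$ torsion bounds $\Rightarrow$ curvature-difference bounds $\Rightarrow$ Theorem 4'' breaks at the first arrow.

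What the paper does instead is exploit a weaker, but sufficient, piece of information: the complex Laplacian depends only on the Lee form $\xi$, via $\square = \tfrac12\Delta + \xi(\nabla)$, not on the full torsion. Globally conformally flat metrics are conformally balanced, so $\xi$ is an exact one-form whose potential is expressed in terms of the conformal factor $f$. The proof then feeds the Yamabe-type equation into the Harnack inequality of Lemma 8 (this is where $d$, $K$, $k$, $n$, $R^2$, $|\nabla R|$, and $i$ enter), and uses the conformal transformation formula for Ricci curvature to upgrade the resulting $C^0$ and $C^1$ bounds on $\log u$ to $C^2$ bounds on $e^{-2f}$. These $C^2$ bounds control $\|\xi\|$ and $\|\nabla\xi\|$, and Theorem 1 (the drift Laplacian estimate), \emph{not} Theorem 4, then gives the eigenvalue bound. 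Your machinery for extracting $C^2$ control of the conformal factor from a Yamabe-type equation is in the right spirit, but it must be pointed at the Lee form rather than at the curvature-difference norms.
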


 \begin{theorem}Let $(M^{2n}, g)$ be a compact Riemannian manifold and $J$ be an orthogonal complex structure which is $k$-Gauduchon for some $k>\frac{n}{2}$. Then the spectrum of the complex Laplacian is bounded below by some constant $C$ depending only on $(M^{2n}, g)$, independent of $J$.
 \end{theorem}
 
 The proof of Theorem 6 is by contradiction, and so we are not able to produce an numeric lower bound in this paper in terms of the geometry of $M$. We will do so in a future preprint.

%One possible way to attempt to prove Conjecture 5 would be to prove a Faber-Krahn inequality. The Laplacian with drift has been studied on smoothly-bounded domains in $\R^n$ and in this setting, the following paper proves a version of the Faber-Krahn inequality that only relies on the $C_0$ norm of the drift ~\cite{HNR}. If some version of this result were true on compact manifolds, one could use this result to study Conjecture 5.

For future work, we hope to continue studying torsion and to try to understand the moduli space of complex structures which are orthogonal to a given Riemannian metric. This would show how much information the Riemannian geometry can detect about the complex structure. There are strong restrictions preventing a metric from having a compatible a complex structure. To give a result in this vein, Gauduchon proved that hyperbolic manifolds of dimension great than two do not admit complex structures, ~\cite{PG}, a result that Hernandez-lamoneda extended to hold for negative strictly quarter-pinched manifolds ~\cite{LH}. However, one would hope interesting results also hold when the moduli space is not just the empty set. In such a case, a theorem of Salamon shows that at any point of a $4$-manifold, there is an open neighborhood admitting zero, one, two, or infinitely many orthogonal complex structures ~\cite{SMS}. It is worthwhile to note that in order for a single metric to provide a counterexample to Conjecture 5, it would need to admit infinitely many compatible complex structures. This is a very restrictive condition and it may be the case that metrics with infinitely many complex structures are well behaved enough that the torsion one form is controlled. In this case, Conjecture 5 would be true. However, this would not be a particularly satisfying result since it could still be possible that a sequence of metrics whose curvature and geometry is bounded in some very strong norm could be a blow up sequence for the torsion.

\subsection{Acknowledgements} We owe many thanks to Bo Guan, Bo Yang, Adrian Lam, and Fangyang Zheng for their insights and help deriving these results. Finally, thank you to Kori Brady and Fangyang Zheng for their edits and help making the writing more clear.

\section{Estimating the principal eigenvector of the drift laplacian}

We now prove Theorem 1, which gives an estimate for the principal eigenvalue of the drift Laplacian $L = \Delta + \xi(\nabla)$. This proof is an adaptation of the Li-Yau estimate given in Lectures in Differential Geometry ~\cite{SY} for the principal eigenvalue of the Laplacian. Recall that Theorem 1 states the following:

\begin{theorem*} Suppose $(M^n, g)$ is a compact Riemannian manifold without boundary satisfying $Ric~M \geq -(n-1)k$ for $k \geq 0$ and diameter $d$. Suppose that $u$ satisfies: $$Lu+\lambda u = 0$$
 Let $\| \xi \|$ be the $C_0$ norm of $\xi$ and $\| \nabla \xi \|$ be the $C_0$ norm of $\nabla_X \xi(X)$ for $\| X \| = 1$. If 
$D =  2nd^2$ where $d$ is the diameter of M and $E = \frac{1}{2n}( (16n^2-32n+5) \| \xi \|^2 + 2(n-1)^2 k + 2(n-1) \| \nabla \xi \|)$. 
Then we have $$\lambda \geq \frac{1}{D}  \frac{(1 + \sqrt{1+4DE})^2-DE}{\exp(1 + \sqrt{1+4DE})}$$
\end{theorem*}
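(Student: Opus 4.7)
My plan is to adapt the Li--Yau gradient estimate from \cite{SY} to the drift operator. The crucial new ingredient is a modified Bochner identity: a direct computation using $L = \Delta + \xi(\nabla)$ gives
\[
L|\nabla u|^2 = 2|\nabla^2 u|^2 + 2\langle \nabla u, \nabla Lu\rangle + 2\,\mathrm{Ric}(\nabla u, \nabla u) - 2(\nabla\xi)(\nabla u, \nabla u),
\]
and the final term, controlled pointwise by $2\|\nabla\xi\||\nabla u|^2$, is the source of the $2(n-1)\|\nabla\xi\|$ contribution in $E$. A second new feature appears when one applies the Cauchy--Schwarz bound $|\nabla^2 u|^2 \ge (\Delta u)^2/n$: here one must substitute $\Delta u = Lu - \xi(\nabla u) = -\lambda u - \xi(\nabla u)$, which produces cross terms involving $\xi(\nabla u)$ that will generate the $\|\xi\|^2$ contribution.

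First I would normalize the eigenfunction by a shift and rescaling so that $|\bar u|\le 1$ with both extrema attained, then introduce a Li--Yau test function of the form $P = |\nabla \bar u|^2/(c^2 - \bar u^2)$ (or the equivalent logarithmic version $|\nabla\log(c-\bar u)|^2$), with a parameter $c > 1$ to be optimized. Applying the modified Bochner formula to $P$, invoking the Ricci lower bound, and splitting each cross term of the form $\xi(\nabla u)\cdot(\text{something})$ by weighted Cauchy--Schwarz $2ab \le \epsilon a^2 + \epsilon^{-1}b^2$ (with weights chosen so that the $|\nabla^2 u|^2$ reservoir absorbs the bad pieces) yields a differential inequality for $P$. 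The coefficient $16n^2 - 32n + 5$ in $E$ reflects the precise bookkeeping of these splittings, combined with the factor $n$ coming from the trace inequality. At a maximum point $x_0$ of $P$, the maximum principle $LP(x_0) \le 0$ converts this differential inequality into an algebraic one, which solved yields a pointwise gradient bound of the form $|\nabla \bar u|^2 \le F(\lambda, E, c)(c^2 - \bar u^2)$.

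Integrating this estimate along a length-minimizing geodesic from a near-maximum to a near-minimum point of $\bar u$ contributes the diameter factor $d$; rearranging gives a lower bound on $\lambda$ in terms of $E$, and the constant $D = 2nd^2$ emerges from combining $d^2$ with the $n$ from Cauchy--Schwarz and the normalization. The specific algebraic shape of the conclusion, in particular the expression $1 + \sqrt{1+4DE}$, reflects a final one-parameter optimization: one obtains a family of intermediate bounds $\lambda \ge \alpha\, q(\alpha)/(D\exp(\alpha))$ whose optimizer $\alpha$ solves a quadratic equation with positive root $1 + \sqrt{1+4DE}$. The main obstacle is not conceptual but bookkeeping: numerous cross terms appear (from $L$ acting on the test function, from expanding $(Lu - \xi(\nabla u))^2/n$, and from the interaction of $\xi$ with $\nabla \bar u/(c^2-\bar u^2)$), and each must be split with a weight chosen carefully so that no term is wasted and the final coefficients match. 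Once the pointwise estimate is established, the integration and optimization steps are essentially those of the classical Li--Yau argument.
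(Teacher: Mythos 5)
Your proposal follows essentially the same route as the paper: a Li--Yau gradient estimate adapted to the drift operator, using Bochner on a ratio test function, the trace/Cauchy--Schwarz step on the Hessian, the maximum principle at the test function's maximum, integration of the resulting gradient bound along a minimizing geodesic, and a final optimization over the free normalization parameter. The only differences are cosmetic choices within the same method --- you normalize with $(c^2-\bar u^2)$ and $|\bar u|\le 1$ while the paper uses $(\beta-u)^2$ with $\sup u = 1$, you state the drift-modified Bochner identity up front while the paper applies the ordinary Bochner formula and then substitutes $\Delta u = -\lambda u - \xi(\nabla u)$, and you absorb the cubic cross term by weighted Cauchy--Schwarz while the paper retains a quartic in $\sqrt{G}$ and bounds its roots by an elementary lemma.
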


\begin{proof}

The proof uses the standard technique of utilizing a Bochner identity on a family of functions $G_\beta(x)$ to establish a gradient estimate and then integrating the estimate to obtain an inequality involving $\lambda$ (as well as the other terms that appear). Finally, we solve the inequality in terms of $\lambda$ and optimize the estimate in terms of $\beta$ to get the desired result. However, the details in the calculation are somewhat messy.

Suppose $u$ satisfies:
\begin{equation}\label{eq:ADE}
\Delta u + \xi(\nabla u) + \lambda u = 0
\end{equation}
 with $\sup u = 1$. Let $\beta >1$ and consider the function $G(x)$ defined by: 
 \begin{equation}\label{eq:G} 
 G(x)= \dfrac{|\nabla u|^2}{(\beta-u)^2} 
 \end{equation} 
 
 We establish an estimate on $G(x)$ and use this to derive an estimate on $u$. 

\subsection{Set up using the Bochner formula}

Then suppose that $G(x)$ is maximized at $x_0.$ We have that $\nabla G(x_0) =0$ and $\Delta G(x_0)  \leq 0$.

Also, we have $G(x)(\beta-u)^2 = |\nabla u|^2$ so: 
$$(\Delta G) (\beta-u)^2 + 2 \nabla G \nabla (\beta-u)^2 + G \Delta (\beta-u)^2 = \Delta | \nabla u|^2$$
At $x_0$, we have $\nabla G(x_0) =0$ so using the Bochner formula in normal coordinates at $x_0$, we have the following:

\begin{eqnarray*} 
0 & \geq & \Delta | \nabla u|^2 -  G \Delta (\beta-u)^2 \\
   & = & 2 \sum_{i,j} u_{ij}^2 + 2 \sum_i u_i (\Delta u)_i + 2 Ric( \nabla u, \nabla u) - 2 G ((-\Delta u)(\beta - u) + |\nabla u|^2)
\end{eqnarray*}

Then dividing by 2, using \eqref{eq:ADE} and the curvature bounds, we have that:

\begin{eqnarray*} 
0 & \geq &  \sum_{i,j} u_{ij}^2 +  \sum_i u_i (-\xi(\nabla u) - \lambda u)_i +  Ric( \nabla u, \nabla u) -  G ((-\Delta u)(\beta - u) + |\nabla u|^2) \\
   & \geq &  \sum_{i,j} u_{ij}^2 +  \sum_i u_i (-\xi(\nabla u) - \lambda u)_i   -  (n-1)k |\nabla u|^2 -  G ((-\Delta u)(\beta - u) + |\nabla u|^2) \\
\end{eqnarray*}

We pick normal coordinates at $x_0$ so that $u_i = 0$ for $i> 1$ and $u_1 = |\nabla u|$. Then $\nabla G(x_0) =0$ implies:
\begin{equation}\label{eq:Grad 0}
u_{11} = \frac{- |\nabla u|^2}{\beta - u} \textrm{ and $u_{1i} = 0$ otherwise.}
\end{equation}

Let $\mathcal{G} = G ((\xi(\nabla u) + \lambda u)(\beta - u) + |\nabla u|^2)$. In the following manipulation we will not change this term so we use $\mathcal{G}$ as shorthand. Then, at $x_0$:
\begin{eqnarray*} 
0  & \geq &  \sum_{i,j} u_{ij}^2 +  \sum_i u_i (-\xi(\nabla u) - \lambda u)_i   -  (n-1)k |\nabla u|^2 -  G ((\xi(\nabla u) + \lambda u)(\beta - u) + |\nabla u|^2) \\
    & = &  \sum_{i,j} u_{ij}^2 +  u_1 (-\xi(\nabla u) - \lambda u)_1   -  (n-1)k |\nabla u|^2 -  \mathcal{G}  \\
    & = &  \sum_{i,j} u_{ij}^2 -  u_1 (\xi(\nabla u))_1 - \lambda u_1^2   -  (n-1)k |\nabla u|^2 -  \mathcal{G}  \\
    & = &  \sum_{i,j} u_{ij}^2 -  u_1 \xi(e_1) u_{11} - \xi(e_1)_1 u_1^2  - \lambda u_1^2   -  (n-1)k |\nabla u|^2 -  \mathcal{G}  \\
    & \geq &  \sum_{i,j} u_{ij}^2 -  u_1 |\xi| |u_{11}| - \| \nabla \xi \| u_1^2  - \lambda u_1^2   -  (n-1)k |\nabla u|^2 -  \mathcal{G} \\
    & = &  \sum_{i,j} u_{ij}^2 -  u_1 |\xi| |u_{11}| - (\| \nabla \xi \| + \lambda +  (n-1)k) |\nabla u|^2 -  \mathcal{G}  \\
\end{eqnarray*}

Therefore, we have:
\begin{equation}\label{eq:Bochner 1}
0 \geq  \sum_{i,j} u_{ij}^2 -  u_1 |\xi| |u_{11}| - (\| \nabla \xi \| + \lambda +  (n-1)k) |\nabla u|^2 -  \mathcal{G}  \\
\end{equation}

%Note that this can only be true at a point. There is no way that $(\xi(e_1))_1 = |\grad \xi|$ and |\xi| is a constant maximum along the entire curve.

\subsection{Putting the second derivatives to use}

 Continuing to work at $x_0$, we now note that:
\begin{eqnarray*} 
\sum_{i,j = 2}^n u_{ij}^2 & \geq & \sum_{i = 2}^n u_{ii}^2 \\
					& \geq & \frac{1}{n-1} \left( \sum_{i = 2}^n u_{ii} \right)^2 \\
					& = & \frac{1}{n-1} \left( \Delta u - u_{11} \right)^2 \\
					& = & \frac{1}{n-1} \left( -\xi (\nabla u) - \lambda u - u_{11} \right)^2 \\
					& = & \frac{1}{n-1} \left( \xi (u_1) + \lambda u + u_{11} \right)^2 \\
					& \geq & \frac{1}{n-1} \left( \frac{u_{11}^2}{2} - (\xi (u_1) + \lambda u)^2 \right) \\
					& \geq & \frac{1}{n-1} \left( \frac{u_{11}^2}{2} - 2 (\xi (u_1))^2 - 2 (\lambda u)^2 \right) \\
\end{eqnarray*}

Substituting this inequality into \eqref{eq:Bochner 1}, we get the following:

\begin{eqnarray*} 
0 & \geq & u_{11}^2 +  \frac{1}{n-1} \left( \frac{u_{11}^2}{2} - 2 (\xi (u_1))^2 - 2 (\lambda u)^2 \right) \\
    &    & -  u_1 |\xi| |u_{11}| - (\| \nabla \xi \| + \lambda +  (n-1)k) |\nabla u|^2 -  \mathcal{G} \\
\end{eqnarray*}
 Using \eqref{eq:G} and the definition of $\mathcal{G}$, we have:

\begin{eqnarray*} 
0 & \geq & u_{11}^2 +  \frac{1}{n-1} \left( \frac{u_{11}^2}{2} - 2 (\xi (u_1))^2 - 2 (\lambda u)^2 \right) -  u_1 |\xi| |u_{11}| \\
    &    &  - (\| \nabla \xi \| + \lambda +  (n-1)k) |\nabla u|^2 -  \frac{| \nabla u|^2}{(\beta-u)^2} ((\xi(\nabla u) + \lambda u)(\beta - u) + |\nabla u|^2) \\
\end{eqnarray*}

Then by \eqref{eq:Grad 0}, the first and last terms cancel, leaving:

\begin{eqnarray*} 
0 & \geq & \frac{1}{n-1} \left( \frac{u_{11}^2}{2} - 2 (\xi (u_1))^2 - 2 (\lambda u)^2 \right) -  u_1 |\xi| |u_{11}| \\
    &    &  - (\| \nabla \xi \| + \lambda +  (n-1)k) |\nabla u|^2 -  \frac{| \nabla u|^2}{(\beta-u)^2} (\xi(\nabla u) + \lambda u)(\beta - u) \\
    & \geq & \frac{1}{2(n-1)} u_{11}^2   -  u_1 |\xi| |u_{11}| - \left( \| \nabla \xi \| + \lambda +  (n-1)k + \frac{2}{n-1} |\xi|^2 \right) |\nabla u|^2 \\
    &    &   - \frac{| \nabla u|^2}{(\beta-u)} (\xi(\nabla u) + \lambda u) - \frac{2}{n-1} \lambda^2 u^2 \\
    & \geq & \frac{1}{2(n-1)} u_{11}^2   -  u_1 |\xi| |u_{11}| - \left( \| \nabla \xi \| + \lambda +  (n-1)k + \frac{2}{n-1} |\xi|^2 \right) |\nabla u|^2 \\
    &    &   - \frac{| \nabla u|^2}{(\beta-u)} |\xi| |\nabla u| - \lambda u \frac{| \nabla u|^2}{(\beta-u)}  - \frac{2}{n-1} \lambda^2 u^2 \\
    & = & \frac{1}{2(n-1)} \frac{| \nabla u|^4}{(\beta-u)^2}   - 2 |\xi| \frac{| \nabla u|^3}{(\beta-u)} - \left( \| \nabla \xi \| + \lambda +  (n-1)k + \frac{2}{n-1} |\xi|^2 \right) |\nabla u|^2 \\
    &    & - \lambda u \frac{| \nabla u|^2}{(\beta-u)}  - \frac{2}{n-1} \lambda^2 u^2 \\
\end{eqnarray*}

Now we divide this inequality by $(\beta-u)^2$ to obtain:

\begin{eqnarray*} 
0 & \geq & \frac{1}{2(n-1)} \frac{| \nabla u|^4}{(\beta-u)^4}   - 2 |\xi| \frac{| \nabla u|^3}{(\beta-u)^3} - \left( \| \nabla \xi \| + \lambda +  (n-1)k + \frac{2}{n-1} |\xi|^2 \right) \frac{| \nabla u|^2}{(\beta-u)^2} \\
    &    & - \lambda u \frac{| \nabla u|^2}{(\beta-u)^3}  - \frac{2}{n-1} \lambda^2 \frac{u^2}{(\beta-u)^2} \\
\end{eqnarray*}

Let $\alpha = \frac{u}{\beta - u}$ and note that $\alpha \leq \frac{1}{\beta - u} \leq \frac{1}{\beta - 1}.$

Then we can rewrite this inequality in terms of $G$ and $\alpha$ as:
\begin{equation}\label{eq:G inequality}
0 \geq \frac{1}{2(n-1)} G(x_0)^2   - 2 |\xi| G^{3/2} - \left( \| \nabla \xi \| + \lambda +  (n-1)k + \frac{2}{n-1} |\xi|^2 \right) G(x_0) - \lambda \alpha G(x_0)  - \frac{2}{n-1} \lambda^2 \alpha^2 \\
\end{equation}

Since $x_0$ maximizes $G(x_0)$, this inequality holds true for $G$ throughout $M$.

\subsection{Deriving and avoiding a quartic equation}

By \eqref{eq:G inequality}, we have
\begin{eqnarray*} 
0 & \geq & G^2   - 4(n-1) |\xi| G^{3/2} - 2(n-1) \left( \| \nabla \xi \| + \lambda +  (n-1)k + \frac{2}{n-1} |\xi|^2 + \lambda \alpha \right) G - 4 \lambda^2 \alpha^2 \\
  &= & G^2   - 4(n-1) |\xi| G^{3/2} - 2(n-1) \left( \| \nabla \xi \| + (n-1)k + \frac{2}{n-1} |\xi|^2 + \lambda ( \alpha+1) \right) G - 4 \lambda^2 \alpha^2 \\
  & \geq & G^2   - 4(n-1) |\xi| G^{3/2} - 2(n-1) \left( \| \nabla \xi \| + (n-1)k + \frac{2}{n-1} |\xi|^2 + \dfrac{\beta}{\beta -1} \lambda \right) G - 4 \lambda^2 \alpha^2 \\
\end{eqnarray*}

Letting $g = \sqrt{G}$, and $A = 4(n-1) |\xi|$, $B = 2(n-1) \left( \| \nabla \xi \| + \dfrac{\beta}{\beta -1} \lambda +  (n-1)k + \frac{2}{n-1} |\xi|^2 \right)$ and $C = 4 \lambda^2 \alpha^2$, this reduces to:

\begin{equation}\label{eq:The quartic}
0 \geq g^4 - A g^3 - B g^2 - C
\end{equation}

We could try to solve this quartic and then use that to get estimates on $\lambda$. However, it is much more straightforward to try to estimate $g$.

\begin{lemma}\label{Roots of quartics}
Given $A,B, C >0$, if x satisfies $P(x) = x^4 - Ax^3 -Bx^2 - C \leq 0$, Then
$x \leq A + \sqrt{B +\sqrt{C}} = a$.
\end{lemma}

Note that $P(x)+C = x^2 (x^2 - Ax -B)$ so it is sufficient to show that $P(a) >0$,  because increasing $x$ will only increase both factors if the latter is already positive.

Then we have the following:
\begin{eqnarray*} 
P(a) & = & (A + \sqrt{B +\sqrt{C}})^4 - A (A + \sqrt{B +\sqrt{C}})^3 - B (A + \sqrt{B +\sqrt{C}})^2 - C \\
 & = & (A + \sqrt{B +\sqrt{C}})^2 \left(  (A + \sqrt{B +\sqrt{C}})^2 - A (A + \sqrt{B +\sqrt{C}}) - B \right) - C\\
& = & (A + \sqrt{B +\sqrt{C}})^2 \left( A^2 + B + \sqrt C + 2A \sqrt{B +\sqrt{C}} - A^2 - A\sqrt{B +\sqrt{C}}) - B \right) - C \\
& = & (A + \sqrt{B +\sqrt{C}})^2 \left( \sqrt C + A \sqrt{B +\sqrt{C}} \right) - C > 0 \\
\end{eqnarray*} 

Thus the lemma is proved. However, in order to make future calculations more feasible, we note the following inequality holds:
$$A + \sqrt{B +\sqrt{C}}  \leq 2 \sqrt{ A^2 + B +\sqrt{C}}$$

Using $2 \sqrt{ A^2 + B +\sqrt{C}}$ for $g$ in our problem now, we obtain:

\begin{eqnarray*} 
g & \leq & \sqrt{ 16(n-1)^2 |\xi|^2 + 2(n-1) \left( \| \nabla \xi \| + \dfrac{\beta}{\beta -1} \lambda +  (n-1)k + \frac{2}{n-1} |\xi|^2 \right) + \sqrt{4 \lambda^2 \alpha^2}} \\
 & = & \sqrt{ (16(n-1)^2+4) |\xi|^2 +2(n-1)^2 k + 2(n-1) \| \nabla \xi \| + ( 2(n-1) \dfrac{\beta}{\beta -1} +2 \alpha) \lambda } \\
  & \leq & \sqrt{ (16n^2-32n+5) |\xi|^2 +2(n-1)^2 k + 2(n-1) \| \nabla \xi \| + 2(\dfrac{\beta (n-1)+1}{\beta -1}) \lambda } \\
    & \leq & \sqrt{ (16n^2-32n+5) |\xi|^2 +2(n-1)^2 k + 2(n-1) \| \nabla \xi \| + 2n(\dfrac{\beta}{\beta -1}) \lambda } \\
\end{eqnarray*} 

Recalling the definition of $g$, we can write this inequality as: 

\begin{equation}\label{eq:Gradient Estimate}
|\nabla u| \leq (\beta - u) \sqrt{ (16n^2-32n+5) |\xi|^2 +2(n-1)^2 k + 2(n-1) \| \nabla \xi \| + 2n(\dfrac{\beta}{\beta -1}) \lambda }
\end{equation}

\subsection{Getting an inequality on $\lambda$}

Now that we have a gradient estimate, we are most of the way done. It remains to integrate the inequality to get a $C^0$ estimate and pick $\beta$ so that this gives us a useful inequality on $\lambda$.

Take $x_1, x_2 \in M$ such that $u(x_1) =0$ and $u(x_2) =1$. Let $\gamma$ be the shortest geodesic joining $x_1$ and $x_2$. Let $d$ be the diameter of $M$. Note that the geodesics and diameter are defined in terms of the Levi-Civita connection because the length of paths depends only on the metric. We will discuss this phenomena further in future preprints.

Then:

\begin{eqnarray*} 
&& \log{\frac{\beta}{\beta-1}}~ \leq ~ \int_\gamma \frac{|\nabla u| }{\beta - u}~ \leq  \\
& \leq & d \sqrt{ (16n^2-32n+5) |\xi|^2 +2(n-1)^2 k + 2(n-1) \| \nabla \xi \| + 2n(\dfrac{\beta}{\beta -1}) \lambda }\\
\end{eqnarray*} 

That is to say:

\begin{eqnarray*} 
 \lambda \geq \frac{\beta -1}{2n \beta} \left( \frac{1}{d^2} (\log{\frac{\beta}{\beta-1}})^2 -  (16n^2-32n+5) |\xi|^2 - 2(n-1)^2 k - 2(n-1) \| \nabla \xi \|) \right) 
\end{eqnarray*}

Let $$E = \frac{1}{2n}( (16n^2-32n+5) |\xi|^2 + 2(n-1)^2 k + 2(n-1) \| \nabla \xi \|),$$ $$D = 2nd^2,$$ $$\textrm{and } x = \frac{\beta}{\beta -1} $$

Then this boils down to:  
\begin{eqnarray*} 
 \lambda \geq \frac{1}{D}  \frac{(\log {x})^2}{x} - \frac{E}{x} = f(x)
\end{eqnarray*}

\subsection{Strengthening the inequality}

Taking the derivative of this with respect to $x$, we find:
$$f^\prime(x) = \frac1{Dx^2}(\log x - (\log x)^2 + DE)$$

This is zero if $\log(x) = 1 + \sqrt{1+4DE}$, which is the value of $x$ which maximizes the right hand side. Finally, this implies that:

$$\lambda \geq \frac{1}{D}  \frac{(1 + \sqrt{1+4DE})^2-DE}{\exp(1 + \sqrt{1+4DE})} $$

\end{proof}

Despite how complicated the estimate is, notice that the quantity scales correctly under the scalar deformation $\rho g$ where $\rho$ is a positive constant.
This estimate is not optimal. There are a few places that this can be improved. We did not solve the quartic equation exactly (and if we had, inverting to solve for $\lambda$ would be unpleasant). Furthermore, the integration essentially assumes that $G(x)$ is constant. It does not effectively using a barrier function as in the theorem due to Zhong-Yang which derives optimal eigenvalue bounds for compact Riemannian manifolds with $Ric(M) \geq 0$ ~\cite{SY}. We imagine that such an estimate can be improved using the various methods of ~\cite{MC}, which we may attempt to do in the future. Also, the effect of the drift is overstated. There is no way that $(\xi(e_1))_1 = |\grad \xi|$ and $|\xi| = \xi(e_1)$ and both quantities are maximized along the entire curve that we integrate along. 

\section{The Complex Laplacian on a Hermitian Manifold}

We now apply the previous estimate to complex geometry. The complex Laplacian on a Hermitian manifold can naturally be written as a Laplacian with drift equation. If $\eta$ is the torsion one-form as before ($\eta_i$ is the $i$-th component of the form as opposed to the derivative) and $\xi$ is the Lee form, then we have the following:

\begin{eqnarray}
 \square f = \frac12 \Delta f + \xi(\nabla f) \\
 \textrm{ and } \eta + \overline{\eta} = - 2 \xi
 \end{eqnarray}

We believe that it is well known at this point, but it is worth noting that $\Delta = 2 \square$ if $\eta = 0$, which is to say that $(M, h)$ is a balanced metric. Therefore, a sufficient condition for the Laplacian and twice the complex Laplacian to be isospectral is that the metric is balanced. Another condition ensuring that the metric is balanced is that $\xi$ = 0, and $\xi$ is exact if and only if the metric is conformal to a balanced metric. Therefore, the estimates from the Witten-Laplacian can only be used in the special case where the metric is conformally equivalent to a balanced metric.

\begin{conjecture} Given, $(M^n, h)$ a complex manifold, if $\Delta$ and $2 \square$ have the same spectrum, then $(M^n, h)$ is balanced.
\end{conjecture}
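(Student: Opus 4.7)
The plan is to exploit spectral rigidity via heat trace asymptotics. The hypothesis $\mathrm{spec}(\Delta) = \mathrm{spec}(2\square)$, together with self-adjointness of both $\Delta$ and $\square = \bar\partial^{*}\bar\partial$ on $L^{2}(M, dV)$, gives
$$\mathrm{Tr}(e^{-t\Delta}) \;=\; \mathrm{Tr}(e^{-2t\square}) \qquad \text{for all } t > 0.$$
Both traces admit small-$t$ asymptotic expansions $(4\pi t)^{-n}\sum_{k\ge 0} a_{k}\,t^{k}$ whose coefficients $a_{k}$ are integrals of universal polynomials in the curvature and torsion densities. The aim is to match the $a_{k}$ and extract from them the pointwise vanishing of $\eta$.

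Concretely, I would proceed as follows. (i) The $a_{0}$ coefficients agree, both equalling $\mathrm{vol}(M)$, and $a_{1}(\Delta) = \tfrac{1}{6}\int_{M} R\, dV$ is standard. (ii) To compute $a_{1}(2\square)$, rewrite $2\square$ as a generalized Laplacian $(\nabla^{c})^{*}\nabla^{c} + V$ for the Chern connection $\nabla^{c}$ and a symmetric scalar potential $V$ arising from the Weitzenb\"ock identity for $\bar\partial^{*}\bar\partial$ on functions; then apply Gilkey's universal formula
$$a_{1}\!\left((\nabla^{c})^{*}\nabla^{c} + V\right) \;=\; \tfrac{1}{6}\!\int_{M} R\,dV \;-\; \int_{M} V\,dV.$$
A flat-torus computation with constant drift $X$ yields the heat kernel $(4\pi t)^{-n} e^{-t|X|^{2}/4}$, which predicts that on a Hermitian $M$ the potential $V$ contains a term proportional to $|\eta|^{2}$ with a definite sign, leading to
$$a_{1}(\Delta) - a_{1}(2\square) \;=\; C\!\int_{M}|\eta|^{2}\, dV \;+\; (\text{total-divergence terms})$$
for some $C > 0$. (iii) Since divergence terms integrate to zero on the compact $M$, matching the $a_{1}$'s forces $\int_{M}|\eta|^{2}\,dV = 0$, hence $\eta \equiv 0$, i.e., $(M,h)$ is balanced.

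The main obstacle is step (ii): making the Weitzenb\"ock identity for $\square$ precise on a non-K\"ahler manifold and verifying the positive sign of the $|\eta|^{2}$ contribution. The schematic formula $2\square = \Delta + 2\xi(\nabla)$ is delicate because $\xi(\nabla)$ is not self-adjoint for real $\xi$, so the precise operator identity must include symmetric torsion-potential terms that render $2\square - \Delta$ self-adjoint; identifying the correct potential requires careful use of Chern-connection identities of Gauduchon-Yang-Zheng type. The author's Theorems 2 and 3 bounding $\|\eta\|$, $\|T\|$, and $\|\nabla T\|$ by the intrinsic curvature norms $\|R^{h}-R\|_{\star}$ and $\|R^{h}-R\|_{\star\star}$ should be instrumental in expressing $V$ in terms of global geometric data. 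If the sign of the $|\eta|^{2}$ contribution to $a_{1}$ is not definite, or if $\eta$ appears at this order only through total divergences, one cascades to $a_{2}$, which generically contains $\int|\nabla^{c}\eta|^{2}\,dV$ and $\int|\eta|^{4}\,dV$ with positive coefficients, and iterates the same matching procedure.
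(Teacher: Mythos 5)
This statement is posed in the paper as an \emph{open conjecture} (Conjecture~7): the author explicitly remarks that ``any counterexample would be a very interesting Hermitian manifold in its own right'' and supplies no proof. So your proposal is not a re-derivation of an argument in the paper; you have sketched a genuine proof of something the paper leaves open. The heat-trace strategy is in fact sound and, with the corrections below, it does yield a proof when ``same spectrum'' is read as same eigenvalues with the same algebraic multiplicities (the natural convention in spectral geometry, and the one required for trace comparison).

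Two points need repair. First, the identification $\square = \bar\partial^{*}\bar\partial$ together with the claim of self-adjointness is not what the paper uses. The paper's $\square$ satisfies the operator identity $\square = \tfrac12\Delta + \xi(\nabla)$ with $\xi$ the (real) Lee form, and for a non-exact real drift this operator is \emph{not} self-adjoint on $L^{2}(M,dV)$ --- its $L^{2}$-adjoint is $\tfrac12\Delta - \xi(\nabla) - \operatorname{div}\xi^{\sharp}$. This is precisely why the conjecture has content. You partly notice this tension later in your writeup, but the clean resolution is different from the one you suggest: one does \emph{not} need to symmetrize $2\square - \Delta$ into a potential. The heat semigroup $e^{-2t\square}$ is a smoothing operator on a compact manifold and hence trace class, so by Lidskii's theorem $\operatorname{Tr}(e^{-2t\square}) = \sum_{\lambda}m_{\lambda}e^{-2t\lambda}$ with algebraic multiplicities $m_{\lambda}$, regardless of self-adjointness, and the full small-$t$ parametrix expansion $\operatorname{Tr}(e^{-2t\square})\sim(4\pi t)^{-n}\sum_{k}a_{k}t^{k}$ with local $a_{k}$ is valid for \emph{any} second-order elliptic operator with scalar principal symbol. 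Spectral equality with $\Delta$ then gives $\operatorname{Tr}(e^{-t\Delta})=\operatorname{Tr}(e^{-2t\square})$ and hence $a_{k}(\Delta)=a_{k}(2\square)$ for all $k$, with no self-adjointness needed. Second, the sign issue you were unsure about is actually settled by the computation you already do. For $L=\Delta+b\!\cdot\!\nabla+c$ the invariant of weight two that can appear in $a_{1}$ is a universal combination of $R$, $|b|^{2}$, $\operatorname{div}b$, and $c$; your flat-torus computation pins the $|b|^{2}$ coefficient at $-\tfrac14$, so after integration (the $\operatorname{div}b$ term drops) one has $a_{1}(\Delta)-a_{1}(2\square)=\int_{M}|\xi|^{2}\,dV$, a manifestly nonnegative quantity vanishing iff $\xi\equiv 0$, i.e.\ iff the metric is balanced. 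You therefore need not cascade to $a_{2}$ nor fret about an indefinite sign: the obstruction is already captured at $a_{1}$, unambiguously. The remaining care to take is the multiplicity convention, and (if one wishes to be pedantic) invoking the spectral mapping theorem for sectorial elliptic operators to justify that the eigenvalues of $e^{-2t\square}$ are exactly the $e^{-2t\lambda}$; both are standard.
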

 Any counterexample would be a very interesting Hermitian manifold in its own right. It is known that if the complex Laplacian on a manifold is isospectral to the complex Laplacian on a balanced manifold, then it must be balanced as well.
H. Donnelly showed in ~\cite{HD} that if $\square$ and $\Delta$ are isospectral on functions and one-forms, that the manifold is in fact K\"ahler. This is very similar to Peter Gilkey's result that if two complex manifolds are isospectral, they are either both K\"ahler or neither is ~\cite{PG}.

However, we turn our attention away from the cases in which $\eta$ is zero and try to understand it for general Hermitian manifolds. From the Theorem 1, if we have control over $\xi$ and $\nabla \xi$ (where $\nabla$ is with respect to the Levi-Civita connection), then we will be able to obtain bounds on the spectrum of $\square$. By equation 9, this reduces to finding estimates on  $\eta$ and $\nabla \eta$. We now derive such estimates in terms of the curvature of the Levi-Civita and Chern connections.

\subsection{Structural Inequalities on Hermitian Manifolds}

From a geometric (and non-rigorous) point of view, the only metric invariants should come from curvature so the need for twisting of a unitary frame can be entirely determined by how ``non-flat" the underlying space is. Furthermore, the deformation of shapes and angles occurs due to the presence of curvature, not its derivatives, so we expect the torsion is bounded somehow by the curvature and how the curvature of the Chern and Levi-Civita connections differ. This phenomena has been noted before and studied in a somewhat different context in ~\cite{SMS}, which shows that if a complex structure exists at all, certain parts of the Weyl tensor must vanish.
 These observations suggest that torsion should be at most a ``zero-th" order phenomena with respect to the curvature and we seek to formalize this intuition. The following relies heavily on the results from ~\cite{YZ}, which we cite repeatedly.

By equation 41 of ~\cite{YZ}, we have that given any type $(1,0)$ vector X,

$$R^h_{X \bar X X \bar X} -R_{X \bar X X \bar X} = \sum_k |T_{kX}^X|^2$$

Therefore, in any unitary frame $\{ e_i\}$,
\begin{eqnarray*} 
 \sum_{i=1}^n R^h_{i \bar  i i \bar i} -R_{i \bar i i \bar i}  &= &  \sum_{i=1}^n \sum_k |T_{ki}^i|^2 \\
										 &= &  \sum_{k=1}^n \sum_i |T_{ki}^i|^2 \\
										 & \geq & \frac{1}{(n-1)} \sum_{k=1}^n |\eta_k|^2 \\
										  & = & \frac{1}{(n-1)} ||\eta||_2^2 \\
\end{eqnarray*} 

However, we can get stronger estimates using the structure theorems of ~\cite{YZ}.

For clarity, we write the results of Lemma 7 of ~\cite{YZ} here.

\begin{theorem*}{(Lemma 7)}
Let $(M^n,g)$ be a Hermitian manifold and let $p \in M$. Let $\{e_i\}$ be a unitary frame near $p$ such that $\theta |_p = 0$. Then, at the point $p$ we have:
$$2T^k_{ij}{}_{,\bar l} = R^h_{j \bar l i \bar k} -R^h_{i \bar l j \bar k}$$
$$ T^l_{ij}{}_{, k} =  R_{i j k \bar l} - T_{ri}^l T_{jk}^r + T_{rj}^l T_{ik}^r$$
$$2R_{ij \overline{kl}} =  T^l_{ij}{}_{, \bar k} - T^k_{ij}{}_{,\bar l} + 2T_{ij}^r \overline{T_{kl}^r} + T_{ri}^k \overline{T_{rl}^j}+T_{rj}^l \overline{T_{rk}^i}-T_{ri}^l \overline{T_{rk}^j}-T_{rj}^k \overline{T_{rl}^i}$$
$$R_{k \bar l i \bar j} = R^h_{k \bar l i \bar j} - T^j_{ik}{}_{,\bar l} - \overline{ T^i_{jl}{}_{,\bar k}} +T_{ik}^r \overline{T_{jl}^r} - T_{rk}^j \overline{T_{rl}^i} - T_{ri}^l \overline{T_{rj}^k}$$
where r is summed through and $h_{ , i} = e_i(h)$ and $h_{ ,\bar i} = \bar e_i(h)$.
\end{theorem*}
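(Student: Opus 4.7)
The strategy is to exploit the first Bianchi identities of the Chern and Levi-Civita connections together with the change-of-connection formula relating them. The choice of unitary frame with $\theta|_p = 0$ is crucial: it forces $\nabla^c e_i|_p = 0$, so at $p$ all $\nabla^c$-covariant derivatives of tensor components collapse to ordinary directional derivatives of their scalar components in the frame. The central algebraic object is the difference tensor $A = \nabla - \nabla^c$. Since $\nabla^c$ has torsion $T$ of pure type $(2,0)$ while $\nabla$ is torsion-free, one has $A_X Y - A_Y X = -T(X,Y)$; combined with Hermitian-metric compatibility of both connections, this determines $A$ explicitly in terms of $T$ and $\bar T$, and gives $A$ a clean bidegree decomposition.

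The first identity follows from the first Bianchi identity for $\nabla^c$,
\[
\sum_{\mathrm{cyc}} R^h(X,Y)Z = \sum_{\mathrm{cyc}} \bigl( (\nabla^c_X T)(Y,Z) + T(T(X,Y),Z) \bigr),
\]
evaluated on $(e_i, \bar e_l, e_j)$: type considerations ($T$ is pure $(2,0)$ and $R^h$ is pure $(1,1)$) kill all but one term on the right, and using $\nabla^c e_i|_p = 0$ extracts the desired relation up to convention-dependent factors. The second identity is analogous but uses the first Bianchi identity for the torsion-free $\nabla$, with the quadratic torsion corrections $-T_{ri}^l T_{jk}^r + T_{rj}^l T_{ik}^r$ emerging when a $\nabla$-derivative of $T$ is rewritten as a $\nabla^c$-derivative via $A$.

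The fourth identity is the direct change-of-connection formula for the $(1,1)$-in-each-pair component of curvature. Substituting $\nabla = \nabla^c + A$ into the definition of $R$ produces schematically $R - R^h = d^{\nabla^c} A + A \wedge A$ (plus torsion corrections from composing $A$ with $T$), whose $(k,\bar l, i, \bar j)$ component contributes the two $\nabla^c$-derivatives of $T$ (one conjugated) from $d^{\nabla^c} A$ and the quartic torsion contractions from $A \wedge A$. The third identity targets the $(2,2)$-component $R_{ij\overline{kl}}$, for which $R^h$ vanishes by pure bidegree; thus the entire right side is determined by $T$, and antisymmetrizing in $(\bar k, \bar l)$ produces the $T^l_{ij,\bar k} - T^k_{ij,\bar l}$ piece while $A \wedge A$ supplies the four quartic torsion contractions shown.

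The main obstacle is algebraic bookkeeping: tracking signs, conjugations, and index contractions across four identities simultaneously, and verifying that the quartic torsion pieces from $A \wedge A$ organize into the precise combinations stated, using that $T$ is antisymmetric in its lower indices and of pure type $(2,0)$. A related subtlety is that identity (3) requires a genuine antisymmetrization in $(\bar k, \bar l)$ to eliminate symmetric contributions which do not appear on the right, so one must verify that those symmetric pieces cancel via the torsion symmetries rather than having to be absorbed into the stated terms.
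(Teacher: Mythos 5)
The paper does not prove this statement; it reproduces it verbatim as ``Lemma~7 of~\cite{YZ}'' and offers no derivation, so there is no in-paper proof to compare against. That said, your proposed route is a legitimate and reasonably standard alternative to the derivation in~\cite{YZ}, which (as is typical for Yang--Zheng and for Zheng's Hermitian-geometry papers) works in a unitary coframe and extracts these identities by differentiating the Cartan structure equations $d\varphi = -\theta\wedge\varphi + \tau$, $d\theta = -\theta\wedge\theta + \Theta$ at a point where $\theta$ vanishes. Your formulation replaces the structure-equation calculus with the tensorial package: the difference tensor $A = \nabla - \nabla^c$ determined by $A_X Y - A_Y X = -T(X,Y)$ together with metric compatibility of both connections, the change-of-connection formula $R(X,Y) = R^h(X,Y) + (\nabla^c_X A)_Y - (\nabla^c_Y A)_X + [A_X,A_Y] + A_{T(X,Y)}$, and the first Bianchi identity with torsion for $\nabla^c$. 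Your observation that $\theta|_p = 0$ reduces $\nabla^c$-derivatives to directional derivatives of components, and that bidegree purity of $T$ (type $(2,0)$) and of $R^h$ (type $(1,1)$ in the curvature slots) kills most Bianchi terms, is exactly the right mechanism. The two formalisms are equivalent; the structure-equation version makes the scalar identities drop out more mechanically, while yours is more coordinate-free and makes the role of $A\wedge A$ in producing the quartic torsion contractions transparent.

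Two cautions before you treat this as a finished proof. First, your attribution of the second identity to the first Bianchi identity of the torsion-free $\nabla$ is off: that identity says $\sum_{\mathrm{cyc}} R(X,Y)Z = 0$ and contains no torsion; what is actually needed is the first Bianchi identity of $\nabla^c$ restricted to all-$(1,0)$ arguments (whose curvature side vanishes since $R^h_{ij\cdot\cdot}=0$), combined with the change-of-connection formula to express $R_{ijk\bar l}$, which is purely a difference-tensor quantity since the corresponding $R^h$ component vanishes. Second, be aware that the factor of $2$ in the first identity and the precise signs in the quartic terms are convention-dependent; Yang--Zheng normalize the torsion two-form so that such factors of $2$ appear, and a derivation via the tensorial Bianchi identity with the naive definition of $T^k_{ij}$ will not produce them unless you match their normalization. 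Since the entire content here is ``algebraic bookkeeping,'' you cannot wave these away: a wrong sign in the quartic torsion contractions would feed directly into Theorems~2 and~3 of this paper, which depend on exactly those terms.
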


Now we recall the two norms that measure how much the metric fails to be K\"ahler. We define  $\| R^h-R \|^2_\star$ as:

 $$\| R^h-R \|^2_\star = \sum_{i,j,k,l}| R^h_{i \bar j k \bar l} - R_{i \bar j k \bar l} |^2 + 2\sum_{i,j,k,l}| R_{i j \bar k \bar l} |^2 $$

Recall that $R^h_{XY \bar Z \bar W} = 0$ by Gray's theorem so this can be thought of as a norm of the differences of Riemannian and Hermitian curvatures.

Also, we define $\| R^h-R \|^2_{\star \star}$ in the following way:

 $$\| R^h-R \|^2_{\star \star} = \sum_{i,j,k,l}| R_{i j k \bar l} |^2 $$

Since $R^h_{XYZ \bar W} = 0$, the above notation is meaningful. The next theorem shows how $|R^h-R|_\star$ measures how much a metric fails to be K\"ahler. Recall that one possible definition of the K\"ahler condition is that the torsion identically vanishes.

\begin{theorem*}
The following inequalities hold pointwise:
\\
$||T||^2 \leq  |R^h-R|_\star$ and $||\eta||^2 \leq  |R^h-R|_\star$
\end{theorem*}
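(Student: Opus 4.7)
The plan is to polarize equation (41) of \cite{YZ}, recalled earlier in this section as
\[
R^h_{X\bar X X\bar X} - R_{X\bar X X\bar X} \;=\; \sum_k |T^X_{kX}|^2
\]
for every $(1,0)$-vector $X$, to extract a single master identity that simultaneously bounds $\|T\|^2$ and $\|\eta\|^2$ by $\|R^h-R\|_\star$.

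Concretely, I would fix a unitary frame $\{e_i\}$ and for each ordered pair $(i,j)$ substitute $X = e_i + \epsilon e_j$ with $\epsilon\in\mathbb{C}$, then expand both sides as polynomials in $\epsilon,\bar\epsilon$. Writing $T^X_{kX} = \sum_{p,q}\bar a_p a_q T^p_{kq}$ with $a_i=1$, $a_j=\epsilon$, the coefficient of $|\epsilon|^2$ on the right-hand side is
\[
\sum_k\bigl(|T^i_{kj}|^2 + |T^j_{ki}|^2 + 2\Re(T^i_{ki}\overline{T^j_{kj}})\bigr).
\]
On the left-hand side, picking one $\epsilon$ from the two copies of $X$ and one $\bar\epsilon$ from the two copies of $\bar X$ produces four contributions, which by the conjugation symmetry $\overline{R^h_{p\bar q r\bar s}} = R^h_{q\bar p s\bar r}$ and, for the Riemannian piece, the pair-exchange symmetry $R_{A\bar B C\bar D} = R_{C\bar D A\bar B}$, consolidate to
\[
(R^h-R)_{i\bar i j\bar j} + (R^h-R)_{j\bar j i\bar i} + 2\Re(R^h-R)_{i\bar j j\bar i}.
\]

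Next, I would sum this identity over all $i,j\in\{1,\ldots,n\}$ (including $i=j$, which simply reproduces the diagonal identity with a factor of $4$). The right-hand side collapses cleanly: $\sum_{i,j,k}|T^i_{kj}|^2 = \|T\|^2$ and $\sum_{i,j,k}|T^j_{ki}|^2 = \|T\|^2$, while $\sum_{i,j,k}2\Re(T^i_{ki}\overline{T^j_{kj}}) = 2\Re\sum_k |\eta_k|^2 = 2\|\eta\|^2$ by the definition $\eta_k = \sum_i T^i_{ki}$, so the summed right-hand side equals exactly $2\|T\|^2 + 2\|\eta\|^2$. The summed left-hand side is a sum over $n^2$ index pairs of components drawn from the first sum in the definition of $\|R^h-R\|^2_\star$, so Cauchy--Schwarz bounds it by $C(n)\|R^h-R\|_\star$. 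This yields $\|T\|^2 + \|\eta\|^2 \leq C(n)\|R^h-R\|_\star$, which gives both inequalities of the theorem (with the dimensional constant absorbed, as in the paper's statement).

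The main obstacle is the bookkeeping in the polarization step: one has to identify correctly the four contributions to the $|\epsilon|^2$-coefficient of $R^h_{X\bar X X\bar X}$ and use the conjugation symmetry of $R^h$ and the pair-exchange symmetry of Riemannian $R$ to consolidate them into components that actually appear in $\|R^h-R\|_\star$. Writing out the bilinear form $T^X_{kX}$ explicitly in the frame is similarly delicate. Once the polarized identity is correctly derived, the summation and application of Cauchy--Schwarz are entirely routine.
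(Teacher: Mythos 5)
Your proposal is correct, and it reaches the same master identity the paper uses, but by a somewhat different route. The paper quotes the \emph{already-polarized} form of equation (41) of \cite{YZ}, namely $\tfrac12(R^h_{X\bar X Y\bar Y}+R^h_{Y\bar Y X\bar X})-R_{X\bar Y Y\bar X}=\sum_k(|T^k_{XY}|^2+2\mathrm{Re}(T^Y_{kY}\overline{T^X_{kX}}))$, and then converts $R_{X\bar Y Y\bar X}$ into $R_{X\bar X Y\bar Y}-R_{XY\bar X\bar Y}$ via a Bianchi-type identity; this is what introduces the $(2,0)$-type components $R_{ij\bar i\bar j}$ and hence makes the second summand $2\sum|R_{ij\bar k\bar l}|^2$ in $\|R^h-R\|^2_\star$ necessary. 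You instead polarize the diagonal identity by hand, substituting $X=e_i+\epsilon e_j$ and extracting the $|\epsilon|^2$-coefficient; your consolidation using $\overline{R^h_{p\bar q r\bar s}}=R^h_{q\bar p s\bar r}$ for the Chern piece and pair-exchange for the Riemannian piece is correct (indeed $R_{i\bar j j\bar i}$ is real, so the two Riemannian cross-terms combine to $2\Re R_{i\bar j j\bar i}$ exactly as you say). The resulting left-hand side involves only $(1,1)$-type differences $(R^h-R)_{a\bar b c\bar d}$, so your route needs only the first summand of $\|R^h-R\|^2_\star$ and never invokes the $R_{ij\bar k\bar l}$ components at all, which is a small structural advantage. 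Your summed right-hand side comes out as $2\|T\|^2+2\|\eta\|^2$ rather than the paper's $\|T\|^2+2\|\eta\|^2$, simply because the $|\epsilon|^2$-coefficient polarization and the symmetric bilinear polarization in \cite{YZ} are genuinely different polarizations of the same diagonal; both are nonnegative combinations and each dominates both $\|T\|^2$ and $\|\eta\|^2$, so the conclusion is unaffected. Two small caveats worth flagging: (i) the diagonal identity as quoted in the paper, $R^h_{X\bar X X\bar X}-R_{X\bar X X\bar X}=\sum_k|T^X_{kX}|^2$, is off by a factor of $2$ relative to the polarized form (setting $X=Y$ in the latter gives $2\sum_k|T^X_{kX}|^2$), though this only changes a harmless multiplicative constant; (ii) both your argument and the paper's produce a dimensional constant from the final Cauchy--Schwarz step, whereas the theorem statement writes the inequalities without a constant --- this looseness is already present in the paper and is not a defect you introduced.
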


\begin{proof}
From equation 41 of ~\cite{YZ},
  \begin{eqnarray*} 
   \frac12(R^h_{X \bar X Y \bar Y} + R^h_{Y \bar Y X \bar X}) - R_{X \bar Y Y \bar X} & = &  \sum_{k} ( |T_{XY}^k|^2 + 2Re(T_{kY}^Y \overline{T_{kX}^X}))
\end{eqnarray*}
We also have that $R_{XY \bar X \bar Y} = R_{X \bar X Y \bar Y}- R_{X \bar Y Y \bar X}$. Therefore, we can rewrite the left-hand side of the above equation:

  \begin{eqnarray*} 
 & &  \sum_{k} ( |T_{XY}^k|^2 + 2Re(T_{kY}^Y \overline{T_{kX}^X}))\\
    & = & \frac12(R^h_{X \bar X Y \bar Y} + R^h_{Y \bar Y X \bar X}) - R_{X \bar X Y \bar Y} - R_{XY \bar X \bar Y} \\
   & = & \frac12 ( R^h_{X \bar X Y \bar Y} - R_{X \bar X Y \bar Y}) +  \frac12 ( R^h_{Y \bar Y X \bar X} - R_{Y \bar Y X \bar X}) - R_{XY \bar X \bar Y} \\
\end{eqnarray*}

We want to gain a better understanding of $\sum_{k=1}^n 2 Re( T_{kX}^X \overline{T_{kY}^Y})$. Choose a unitary frame and let $\sum_{i} T^i_{ki} = \eta_k$.

   \begin{eqnarray*}  \sum_{i} \sum_{j}   Re(T^i_{ki} \overline{ T^j_{kj} } )
   & = & Re( \sum_{i} T^i_{ki}  (\sum_{j}  \overline{ T^j_{kj}}  ) \\
 & = & Re( \sum_{i} T^i_{ki} ( \bar \eta_k)) \\
 & = & Re( \bar \eta_k \sum_{i} T^i_{ki}) \\
 & = & Re( \bar \eta_k \eta_k) \\
 & = & |\eta_k|^2
 \end{eqnarray*}

 That is to say, $$\sum_{i} \sum_{j} 2 Re(T^i_{ki} \overline{ T^j_{kj} } ) = 2|\sum_{i} T^i_{ki}|^2 = 2 |\eta_k|^2$$
 
 Thus, we have the following:
   \begin{eqnarray*} 
& & \sum_i \sum_{j} \frac12 ( R^h_{i \bar i j \bar j} - R_{i \bar i j \bar j}) +  \frac12 ( R^h_{j \bar j i \bar i} - R_{j \bar j i \bar i}) - R_{i j \bar i \bar j} \\
& = & \sum_i \sum_{ j}  \sum_{k} ( |T_{ij}^k|^2 + 2Re(T_{ki}^i \overline{T_{kj}^j})) \\
& = & \sum_{k}  \sum_i \sum_{ j} ( |T_{ij}^k|^2 + 2Re(T_{ki}^i \overline{T_{kj}^j}))   \\
& = & \sum_{k}  \left( (\sum_i \sum_{ j}  |T_{ij}^k|^2) + 2|\eta_k|^2 \right)  \\
& = & ||T||^2 + 2 ||\eta||^2
\end{eqnarray*}
 
This then immediately proves that $||T||^2 \leq  \|R^h-R\|_\star$. 
\end{proof}
There are many terms in $\| R^h - R \|_\star$ that are not needed to control the torsion, but we define $|R^h-R|_\star$ as such in view of the next lemma.

\begin{lemma}

The following inequality holds where $\nabla^{c''}$ is the derivative with respect to the Chern connection of a $(0,1)$ vector:
$\| \nabla^{c''} T \| \leq   \| R^h-R \|_\star$
\end{lemma}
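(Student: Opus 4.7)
The plan is to derive the bound directly from the first identity of Lemma 7 of~\cite{YZ}, namely
\[
2T^k_{ij,\bar l} \;=\; R^h_{j\bar l i\bar k} - R^h_{i\bar l j\bar k}.
\]
Since the right-hand side is written purely in terms of the Hermitian curvature, the first step is to replace it with an expression that has $R^h - R$ in the leading piece. Writing $R^h = (R^h - R) + R$, I get
\[
2T^k_{ij,\bar l} \;=\; (R^h-R)_{j\bar l i\bar k} - (R^h-R)_{i\bar l j\bar k} + \bigl(R_{j\bar l i\bar k} - R_{i\bar l j\bar k}\bigr),
\]
so the task reduces to showing the last parenthesis is a curvature component of the type already accounted for in $\|R^h-R\|_\star$.

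This is exactly where the first Bianchi identity for the Levi-Civita curvature enters. Using pair-swap symmetry on the Riemannian piece, $R_{j\bar l i\bar k} = R_{i\bar k j\bar l}$ and $R_{i\bar l j\bar k} = R_{j\bar k i\bar l}$, and then applying the cyclic Bianchi identity to the triple of indices $(i,\bar k, j)$, one finds
\[
R_{i\bar k j\bar l} - R_{j\bar k i\bar l} \;=\; R_{ij\bar k\bar l}.
\]
The point is that the antisymmetrization of a mixed $(1,1,1,1)$-type Levi-Civita component collapses onto a $(2,0,0,2)$-type component, and this is precisely the piece that shows up (with coefficient $2$) in the definition of $\|R^h-R\|_\star^2$. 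Combining, I get the clean identity
\[
2T^k_{ij,\bar l} \;=\; (R^h-R)_{j\bar l i\bar k} - (R^h-R)_{i\bar l j\bar k} + R_{ij\bar k\bar l}.
\]

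From here the rest is a routine Cauchy--Schwarz computation. Applying $|a+b+c|^2 \leq 3(|a|^2 + |b|^2 + |c|^2)$ to each term, summing over $i,j,k,l$, and relabeling dummy indices, the two $(R^h-R)$-summands each give $\sum |(R^h-R)_{i\bar j k\bar l}|^2$ and the third gives $\sum |R_{ij\bar k\bar l}|^2$. Comparing against
\[
\|R^h - R\|_\star^2 \;=\; \sum|(R^h-R)_{i\bar j k\bar l}|^2 + 2\sum|R_{ij\bar k\bar l}|^2,
\]
the sum $2\sum|(R^h-R)_{i\bar j k\bar l}|^2 + \sum|R_{ij\bar k\bar l}|^2$ is bounded by a constant multiple of $\|R^h-R\|_\star^2$, and one reads off the stated inequality (the precise constant in front of $\|R^h-R\|_\star$ is harmless here since the paper is absorbing numerical factors).

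The main obstacle is really a bookkeeping one: making sure the Bianchi manipulation with complex indices is correctly applied so that the ``leftover'' Riemannian term lands in a slot already controlled by $\|R^h-R\|_\star$ rather than by $\|R^h-R\|_{\star\star}$. If instead it had collapsed onto a $(3,1)$-type component $R_{ijk\bar l}$, we would need $\|R^h-R\|_{\star\star}$ and the bound stated in Theorem~2 (rather than the cleaner form here) would be the best available. So identifying the right Bianchi cyclic triple is the conceptual step, and once that is done the estimate is essentially mechanical.
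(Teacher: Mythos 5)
Your argument is essentially identical to the paper's. The paper's proof is a two-line sketch: it quotes the first identity of Lemma~7 from \cite{YZ}, records the Bianchi computation $R_{i\bar j k\bar l} - R_{k\bar j i\bar l} = R_{ik\bar j\bar l}$, and writes the resulting identity for $e_{\bar l}(T^j_{ik})$ as a sum of two $(R^h-R)$-components of mixed $(1,1)$-type plus one Riemannian $(2,0,0,2)$-component, leaving the final Cauchy--Schwarz step implicit. You reproduce exactly this decomposition, with the same Bianchi cyclic triple and the same conclusion that the leftover Riemannian piece lands in the $R_{ij\bar k\bar l}$ slot already present in $\|R^h-R\|_\star^2$ rather than in $\|R^h-R\|_{\star\star}$; the explicit $|a+b+c|^2 \le 3(|a|^2+|b|^2+|c|^2)$ step you add is the implicit final step the paper skips. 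Your closing remark about the numerical constant being absorbed is appropriate, since the paper's own proof is silent on the constant as well.
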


\begin{proof}
We use the first equation of Lemma 7 and the following Bianchi identity:
 \begin{eqnarray*}
R_{i \bar j k \bar l} - R_{k \bar j i \bar l} &=& R_{i \bar j k \bar l}  + R_{\bar j k  i \bar l} \\
& = & - R_{k i \bar j \bar l} =  R_{i k \bar j \bar l}
\end{eqnarray*}
Combining these we find that:
$$e_{\bar l}( T_{ik}^j )= (R^h_{i \bar j k \bar l} -  R_{i \bar j k \bar l}) -  (R^h_{k \bar j i \bar l} - R_{k \bar j i \bar l}) - R_{i k \bar j \bar l} $$
\end{proof}

We can also bound the derivative with respect to a $(1,0)$ vector.
\begin{lemma}
The following inequality holds where $\nabla^{c'}$ is the derivative with respect to the Chern connection of a $(1,0)$ vector:
$$| \nabla^{c'} T| \leq C(n) \| R^h-R \|_\star + \| R^h-R \|_{\star \star}$$
\end{lemma}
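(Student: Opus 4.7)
The plan is to run exactly the same argument as in the proof of the preceding lemma (the bound on $\nabla^{c''}T$), but now using the second identity of the Lemma 7 of \cite{YZ} rather than the first. That identity reads
$$T^l_{ij}{}_{,k} \;=\; R_{ijk\bar l} - T_{ri}^l T_{jk}^r + T_{rj}^l T_{ik}^r,$$
and it already expresses the $(1,0)$-Chern-derivative of $T$ as a sum of a curvature term of the form $R_{ijk\bar l}$ together with a quadratic expression in $T$. Thus the entire proof reduces to estimating each of these three contributions on its own.

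First I would observe that the quantity $\sum_{i,j,k,l}|R_{ijk\bar l}|^2$ is by definition $\|R^h-R\|^2_{\star\star}$ (using that $R^h_{XYZ\bar W}=0$ by Gray), so the first term contributes an $L^2$-norm bounded by $\|R^h-R\|_{\star\star}$. Next, for the two quadratic torsion terms, I would apply Cauchy--Schwarz in the contracted index $r$ and sum over the remaining free indices: this yields a bound of the form
$$\sum_{i,j,k,l}\Bigl|\sum_r T^l_{ri}T^r_{jk}\Bigr|^2 \;\le\; C_1(n)\,\|T\|^4,$$
and similarly for the other quadratic term, where $C_1(n)$ absorbs the counting factors coming from the number of free indices. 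Combining via the triangle inequality in $L^2$ at the point $p$ (where the Lemma 7 identity holds in the chosen unitary frame with $\theta|_p=0$) gives
$$\|\nabla^{c'}T\| \;\le\; \|R^h-R\|_{\star\star} + \sqrt{C_1(n)}\;\|T\|^2.$$

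Finally I would invoke Theorem 2 (already proved in the excerpt: $\|T\|^2\le \|R^h-R\|_\star$) to replace $\|T\|^2$ by $\|R^h-R\|_\star$, thereby concluding
$$\|\nabla^{c'}T\| \;\le\; C(n)\,\|R^h-R\|_\star + \|R^h-R\|_{\star\star}$$
with $C(n)=\sqrt{C_1(n)}$. Since the identity of Lemma 7 is pointwise and unitary frames with $\theta|_p=0$ exist at every point, this gives the desired pointwise inequality throughout $M$.

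The only real obstacle is bookkeeping for the combinatorial constant $C_1(n)$: one must be careful to apply Cauchy--Schwarz only on the internally summed index $r$ (not on the free $i,j,k,l$) so that the resulting bound is genuinely $\|T\|^4$ rather than something weaker. Aside from this, the argument is essentially algebraic, mirroring the proof for $\nabla^{c''}T$ but with the extra quadratic-in-$T$ correction that forces the appearance of both norms on the right-hand side.
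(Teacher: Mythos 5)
Your argument is correct and matches the paper's approach exactly: the paper simply cites the identity $T^l_{ij}{}_{,k} = R_{ijk\bar l} + T^l_{rj}T^r_{ik} - T^l_{ri}T^r_{jk}$ from Lemma~7 of \cite{YZ} and calls the conclusion immediate, leaving the reader to absorb the curvature term into $\|R^h-R\|_{\star\star}$ and the quadratic torsion terms into $\|T\|^2\le\|R^h-R\|_\star$ via Cauchy--Schwarz, which is precisely what you spell out. The only cosmetic remark is that Cauchy--Schwarz applied only on the contracted index $r$ already gives $\sum_{i,j,k,l}\bigl|\sum_r T^l_{ri}T^r_{jk}\bigr|^2 \le \|T\|^4$ with no extra $n$-dependent counting factor, so $C(n)$ here can be taken as an absolute constant coming from the two quadratic terms.
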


This is an immediate consequence of the equation from Theorem 4 and the following equation from ~\cite{YZ}:
$$ T^l_{ij}{}_{, k} =  R_{i j k \bar l} + T_{rj}^l T_{ik}^r - T_{ri}^l T_{jk}^r$$

Using a straightforward computation, we can relate the derivatives of the torsion tensor with respect to the Levi-Civita connection to the derivatives of the torsion tensor with respect to the Chern connection and a quadratic expression in torsion. Therefore, we also have the following result:

\begin{theorem*}
Let $T$ be the torsion tensor and $\nabla T$ the derivative of the torsion tensor with respect to the Levi-Civita connection. Then there exists a  constant $C^\prime(n)$ that grows at most linearly in $n$ such that the following inequality holds:
$$||\nabla T|| \leq   C^\prime(n) \|R^h-R \|_\star + \| R^h -R \|_{\star \star} $$
\end{theorem*}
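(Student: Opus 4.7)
The plan is to reduce the Levi-Civita derivative $\nabla T$ to the Chern derivative $\nabla^c T$, for which we already have the two estimates from Lemma 9 and Lemma 10 of the preceding subsection, plus a quadratic-in-$T$ error coming from the difference of the two connections. Concretely, I would write
\[
\nabla = \nabla^c + A,
\]
where $A := \nabla - \nabla^c$ is a tensor field on $M$ built algebraically from the torsion $T$ (and its conjugate). Because the Chern connection is the unique metric connection on the holomorphic tangent bundle whose $(0,1)$-part agrees with $\bar\partial$, while $\nabla$ is torsion-free and metric, a standard computation identifies $A$ with the Hermitian analogue of the contorsion, and in particular gives a pointwise bound $|A| \leq c\, |T|$ with a universal (dimension-free) constant $c$.

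Applying $\nabla = \nabla^c + A$ to $T$ yields a Leibniz identity schematically of the form $\nabla T = \nabla^c T + A \ast T$, where $A \ast T$ is a finite sum of tensor contractions of $A$ with $T$. Pointwise this gives $|A \ast T| \leq c_1 |T|^2$, with $c_1$ absolute, and after taking the $L^\infty$ norm of the tensor coordinates and summing, one obtains an estimate of the form $\|A \ast T\| \leq c_2(n)\, \|T\|^2$, where $c_2(n)$ grows at most linearly in $n$ (each free index in $A\ast T$ contributes at most a factor $\sqrt{n}$, and carefully counting the free versus contracted indices in the expression for $A$ coming from $T_{ij}^k$ shows that the resulting growth is linear, not higher). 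Invoking the pointwise inequality $\|T\|^2 \leq \|R^h - R\|_\star$ from Theorem~2, this error term is bounded by $c_2(n)\, \|R^h - R\|_\star$.

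Next, decompose $\nabla^c = \nabla^{c'} + \nabla^{c''}$ into its $(1,0)$ and $(0,1)$ parts, and apply Lemmas 9 and 10 (the two preceding lemmas):
\[
\|\nabla^c T\| \leq \|\nabla^{c'} T\| + \|\nabla^{c''} T\| \leq \bigl(1 + C(n)\bigr) \|R^h - R\|_\star + \|R^h - R\|_{\star\star}.
\]
Here $C(n)$ from Lemma 10 is already linear in $n$. Combining with the previous paragraph gives
\[
\|\nabla T\| \leq \|\nabla^c T\| + \|A \ast T\| \leq C'(n)\, \|R^h - R\|_\star + \|R^h - R\|_{\star\star},
\]
with $C'(n) := 1 + C(n) + c_2(n)$ linear in $n$, as claimed.

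The only delicate point I anticipate is the dimensional bookkeeping in the quadratic error term $\|A \ast T\|$: it would be easy to accidentally produce a factor of $n^{3/2}$ or $n^2$ by a crude application of Cauchy--Schwarz on the many index contractions. The honest step is to expand $A \ast T$ explicitly (using $A_{ij}^k \sim T_{ij}^k + \overline{T_{\cdot\cdot\cdot}^\cdot}$), apply Cauchy--Schwarz only on the contracted (summation) indices, and leave the free indices alone — this yields exactly one extra $\sqrt{n}$ per dummy index, and since $A \ast T$ involves only one dummy index contraction at a time, the resulting $c_2(n)$ is linear. Everything else is a purely formal combination of the lemmas already established.
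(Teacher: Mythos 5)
Your proposal is correct and matches the paper's approach: the paper dispatches this step with a single sentence (``Using a straightforward computation, we can relate the derivatives of the torsion tensor with respect to the Levi-Civita connection to the derivatives of the torsion tensor with respect to the Chern connection and a quadratic expression in torsion''), and your decomposition $\nabla T = \nabla^c T + A\ast T$, the bound $\|T\|^2 \le \|R^h-R\|_\star$ from Theorem 2, and the two preceding lemmas for $\nabla^{c'}T$ and $\nabla^{c''}T$ are exactly what that sentence is gesturing at. One small remark on the bookkeeping: applying Cauchy--Schwarz only over the single contracted dummy index in each term $A_{\cdot\,\cdot}^{m}T_{m\,\cdot}^{\cdot}$ gives $\|A\ast T\|\le c\,\|A\|\,\|T\|$ with an absolute (dimension-free) constant, since no free index is ever estimated crudely, so $c_2(n)$ is in fact $n$-independent rather than linear --- which is of course still ``at most linear'' and only tightens your conclusion.
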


As a consequence to this result, we have the following observation.

\begin{theorem*}
Let $(M^{2n}, g, J)$ be a compact complex manifold with pluriclosed metric g.  Consider the following initial value problem
$$ \frac{\partial}{\partial t} \omega = \partial \partial^{*} \omega + \overline{ \partial \partial^{*}} \omega + \frac{\sqrt{-1}}{2} \partial \bar \partial \log \det g$$
$$w(0)= g(\cdot, J \cdot)$$
There exists a constant $c(n)$ depending only on $n$ such that there exists a unique solution $g(t)$ for
$$t \in [0, \frac{c(n)}{\max(| R^h |_{C^0(g_0)}, | R |_{C^0(g_0)})}]$$
Furthermore, we have existence until either $| R^h |_{C^0(g_0)}$ or  $| R |_{C^0(g_0)}$ blow up.
\end{theorem*}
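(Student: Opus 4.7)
The plan is to combine the standard short-time existence theory for the pluriclosed flow (due to Streets-Tian) with the structural inequalities proved in Section 3, in order to extract the quantitative dependence of the existence time on the Riemannian and Chern curvatures alone. Writing the flow in local coordinates, a direct calculation recasts it as a quasilinear second-order system
$$\frac{\partial}{\partial t} g_{i\bar j} = -S_{i\bar j} + Q_{i\bar j},$$
where $S_{i\bar j} = g^{k\bar l} R^h_{k\bar l i \bar j}$ is the second Chern Ricci tensor and $Q_{i\bar j}$ is quadratic in the torsion. The leading operator agrees, modulo lower-order terms, with that of K\"ahler-Ricci flow and fails to be strictly parabolic only because of biholomorphism gauge invariance, so a DeTurck-type trick reduces the problem to a strictly parabolic system amenable to classical short-time existence theory.

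To pin down the quantitative existence time $c(n)/\max(|R^h|_{C^0(g_0)}, |R|_{C^0(g_0)})$, I would use the scaling symmetry: under $g \mapsto \lambda^2 g$ and $t \mapsto \lambda^2 t$ the curvatures scale by $\lambda^{-2}$ and the equation is preserved, so one may normalize $\max(|R^h|_{C^0}, |R|_{C^0}) = 1$ and reduce to producing a uniform existence time depending only on $n$. Here Theorems 2, 3, and 4 do the real work: they dominate $\|T\|$ and $\|\nabla T\|$ pointwise by $\|R^h - R\|_\star$ and $\|R^h - R\|_{\star\star}$, each of which is in turn bounded by $C(n)(|R^h|_{C^0} + |R|_{C^0})$ by simple comparison of components. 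Consequently the lower-order term $Q_{i\bar j}$ and its first derivatives are controlled in the appropriate H\"older norms purely by the curvature hypothesis, and Schauder estimates for the linearized DeTurck-modified flow yield a uniform existence time $c(n)$.

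The continuation criterion follows by a standard contradiction: if the flow exists on a maximal interval $[0,T)$ with $T < \infty$ while $\max(|R^h|_{C^0(g(t))}, |R|_{C^0(g(t))})$ stays bounded as $t \to T$, then Theorems 2, 3, and 4 give uniform bounds on the torsion and its Levi-Civita derivative along the flow; Shi-type derivative estimates then propagate these to $C^k$ bounds on the metric for every $k$, and Arzel\`a-Ascoli extracts a smooth limit $g(T)$ that can be taken as new initial data, contradicting maximality. The main obstacle throughout is checking that the DeTurck gauge fixing and the Shi-type estimates transpose cleanly from Ricci flow to this setting, whose lower-order structure differs in essential ways via the torsion contribution $Q_{i\bar j}$; the structural inequalities of Section 3 are exactly what ensures that $Q_{i\bar j}$ and its derivatives never introduce drift that is not already bounded by $|R^h|_{C^0}$ and $|R|_{C^0}$, so the analysis reduces to a controllable perturbation of K\"ahler-Ricci flow.
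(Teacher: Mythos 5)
Your proposal is correct and takes essentially the same approach as the paper: invoke Streets--Tian's short-time existence theorem for pluriclosed flow and use the structural inequalities of Section 3 (Theorems 2--4) to convert the torsion-dependent existence time and blow-up criterion into bounds purely in terms of $|R^h|_{C^0}$ and $|R|_{C^0}$. The paper simply states the result as an immediate consequence of Theorem 1.2 of \cite{ST} rather than re-deriving the DeTurck gauge-fixing, Schauder, and Shi-type machinery as you do, but the one genuinely new step---dominating $\|T\|$, $\|\nabla T\|$, and $\|\nabla^c T\|$ by $\|R^h-R\|_\star$ and $\|R^h-R\|_{\star\star}$, which are in turn controlled by $|R^h|_{C^0}+|R|_{C^0}$---is identical.
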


This follows immediately from Theorem 1.2 in ~\cite{ST}. This shows that pluriclosed flow exists until either the Chern curvature or Riemannian curvature blows up. In the first case, $M$ fails to be a well behaved complex manifold and in the latter, $M$ fails to be a well behaved Riemannian manifold. It should be noted that this observation is weaker than the improved regularity theorem of ~\cite{ST2}, which provides a Bismut-Ricci curvature term which suffices to prove regularity for pluriclosed flow.

In future preprints, we will explore these inequalities in much greater depth. One can derive monotonicity results and other structure theorems by leveraging the equations we have used in this paper. However, for the purposes of bounding the principal eigenvalue, what we've done is sufficient.

\section{An estimate on the principal eigenvalue of the Complex Laplacian}

We now combine the two theorems from the previous section with our estimate from the section before.

\begin{theorem*}
Let $(M^n, h)$ be a compact Hermitian manifold without boundary with Riemannian curvature satisfying $Ric \geq -(n-1)k$ for $k \geq 0$ and $diam(M) = d$. Consider the complex Laplacian $\square$. Let $u$ satisfy $\square u = \lambda u$ and $K = n^2 (k+\| R-R^h \|_\star + \| R-R^h \|_{\star \star} )d^2$. Then there exists an uniform $C >0$ such that the following estimate holds:
$$\lambda \geq \frac{1}{4nd^2}  \frac{\left(1 + \sqrt{1+4CK}\right)^2-CK }{\exp \left( 1 + \sqrt{1+4 CK } \right) }$$
\end{theorem*}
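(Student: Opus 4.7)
The plan is to recast $\square u = \lambda u$ as a drift Laplacian eigenvalue equation and invoke Theorem 1, bounding the drift and its derivative by the Hermitian curvature via Theorems 2 and 3. By the identity $\square f = \tfrac{1}{2}\Delta f + \xi(\nabla f)$ recalled at the start of Section 3, $\square u = \lambda u$ is equivalent to $\Delta u + 2\xi(\nabla u) = 2\lambda u$. Hence Theorem 1 applies to the underlying Riemannian manifold $(M,g)$ with drift one-form $\tilde\xi = 2\xi$ and eigenvalue $\tilde\lambda = 2\lambda$.

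Next, I would control the pointwise norms of $\tilde\xi$ and $\nabla \tilde\xi$ in terms of the Hermitian curvature norms introduced in Section 3. From $\eta + \overline\eta = -2\xi$ together with orthogonality of the $(1,0)$ and $(0,1)$ parts one obtains $\|\tilde\xi\|^2 \leq C_1 \|\eta\|^2$, and Theorem 2 gives $\|\eta\|^2 \leq \|R^h - R\|_\star$. Because the Lee form is an algebraic contraction of the torsion, $\|\nabla \tilde\xi\|$ is controlled by the Levi-Civita derivative $\|\nabla T\|$; Theorem 3 then bounds this by $C'(n)\|R^h - R\|_\star + \|R^h - R\|_{\star\star}$. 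Substituting these into the quantity $E = \tfrac{1}{2n}\bigl((16n^2 - 32n + 5)\|\tilde\xi\|^2 + 2(n-1)^2 k + 2(n-1)\|\nabla \tilde\xi\|\bigr)$ of Theorem 1 and collecting $n$-dependences yields $E \leq C_2 n\bigl(k + \|R^h - R\|_\star + \|R^h - R\|_{\star\star}\bigr)$, so $DE = 2nd^2 \cdot E \leq CK$ for a uniform constant $C$.

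Finally, I would insert this into $2\lambda \geq \tfrac{1}{2nd^2}\phi(DE)$ coming from Theorem 1, where $\phi(x) = \bigl((1+\sqrt{1+4x})^2 - x\bigr)/\exp(1+\sqrt{1+4x})$. Parameterizing by $s = 1+\sqrt{1+4x} \geq 2$ reduces $\phi$ to $(3s^2 + 2s)/(4e^s)$, whose derivative $(-3s^2 + 4s + 2)/(4e^s)$ is negative for $s \geq 2$; hence $\phi$ is monotonically decreasing on $[0,\infty)$. Combined with $DE \leq CK$ this gives $\phi(DE) \geq \phi(CK)$, and therefore $\lambda \geq \tfrac{1}{4nd^2}\phi(CK)$, which is precisely the claimed estimate.

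The main obstacle, and essentially the only delicate point, is the passage from Theorem 3's estimate on $\|\nabla T\|$ to a bound on $\|\nabla \tilde\xi\|$. When one commutes the contraction $\eta_i = \sum_j T_{ij}^j$ with a Levi-Civita derivative, quadratic-in-$T$ error terms arise (and similarly when relating $\nabla \xi$ to $\nabla \eta$), but these are already dominated by $\|T\|^2 \leq \|R^h - R\|_\star$ via Theorem 2, so no new curvature quantity enters. What remains is the careful bookkeeping of constants to produce a single uniform $C$, together with the verification of the monotonicity of $\phi$ sketched above; both are routine.
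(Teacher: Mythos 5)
Your proposal is correct and follows the same strategy as the paper: recast $\square u=\lambda u$ as $\Delta u+2\xi(\nabla u)=2\lambda u$, control $\|\xi\|$ and $\|\nabla\xi\|$ via Theorems 2 and 3, and feed these into Theorem 1. The paper leaves the bookkeeping implicit, so your verification that $\phi$ is decreasing in $DE$ (needed to replace $DE$ by the larger $CK$) and your note that the Chern-vs-Levi-Civita discrepancy in passing from $\nabla^{c}T$ to $\nabla\xi$ is quadratic in $T$ and hence already absorbed by $\|T\|^2\leq\|R^h-R\|_\star$ supply details the paper omits.
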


We can simplify this estimate. There exists an uniform $C >0$ such that:
$$\lambda \geq \frac{1}{4n}  \frac{\left( \frac{2}{d^2} + 3Cn^2 (k+\| R-R^h \|_\star +  \| R-R^h \|_{\star \star} ) \right)}{\exp \left( 1 + \sqrt{1+4 CK } \right) }$$

This is still complicated but it only involves the dimension $n$, the diameter $d$, a lower bound on the Ricci curvature $k$, and a measure of how much the metric fails to be K\"ahler involving only curvature. From the point of view of this estimate, the last term acts in the same way as the Ricci curvature term. 

In a future preprint, we will explore monotonicity formulae to show that in some ways the Hermitian curvature dominates the Riemannian curvature. For instance, it is well known that the Hermitian scalar curvature dominates the Riemmanian scalar curvature. However, our monotonicity results are not yet strong enough to bound the spectrum solely using the Riemannian curvature. Nonetheless, this may be possible in light of a theorem due to Yang and Zheng, which states that if the Riemannian curvature is Gray-K\"ahler-like and the space is compact, then the metric is balanced. Since balanced metrics satisfy $2 \square = \Delta$ on functions, the two operators have the same spectrum. This provides a condition which only depends on the Riemannian curvature and the diameter, that ensures that spectral geometry of the complex laplacian is the same as that of the Laplace-Beltrame operator. However, we cannot hope to control the entire torsion tensor solely using the Riemannian curvature because there exists Riemannian flat metrics that do not have vanishing torsion (such as the tori in \cite{BSV}). Also, the example in ~\cite{YZ} of a non-compact Gray-K\"ahler-like surface that is not K\"ahler shows that Gray-K\"ahler-like alone does not imply balanced.

\begin{conjecture*}
Given a compact complex manifold $(M^n, h)$, there exists $C$ depending only on the Riemannian geometry such that if $\square u = \lambda u$, then $\lambda \geq C$.
\end{conjecture*}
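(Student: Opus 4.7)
The plan is to recognize the complex Laplacian as a drift Laplacian via equation (9), apply Theorem 1 with that drift, and then substitute the bounds from Theorems 2 and 3 to convert the drift and torsion terms into the curvature-only quantities appearing in the statement. The argument is mostly bookkeeping; the analytic work has been done in the preceding sections.

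First, I would use $\square = \tfrac{1}{2}\Delta + \xi(\nabla)$ together with $\eta + \bar\eta = -2\xi$ to rewrite the eigenvalue equation $\square u = \lambda u$ as $\Delta u + 2\xi(\nabla u) = 2\lambda u$. This exhibits $u$ as a solution of the drift Laplacian equation of Theorem 1 with drift one-form $2\xi$ and eigenvalue $2\lambda$. Applying Theorem 1 immediately yields
$$2\lambda \;\geq\; \frac{1}{D}\,\frac{(1+\sqrt{1+4D\widetilde E})^{2} - D\widetilde E}{\exp(1+\sqrt{1+4D\widetilde E})},$$
where $D = 2nd^{2}$ and $\widetilde E$ is the expression from Theorem 1 with $\|\xi\|$ and $\|\nabla\xi\|$ replaced by $2\|\xi\|$ and $2\|\nabla\xi\|$.

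Next, I would bound $\|\xi\|^{2}$ and $\|\nabla\xi\|$ using Theorems 2 and 3. From $\xi = -\tfrac{1}{2}(\eta + \bar\eta)$ one has $\|\xi\|^{2} \leq \|\eta\|^{2} \leq \|R^{h}-R\|_{\star}$ directly from Theorem 2. Similarly, $\nabla\xi$ is built from $\nabla\eta$; since $\eta$ is a trace of $T$, an elementary inequality of the form $\|\nabla\eta\| \leq \sqrt{n}\,\|\nabla T\|$ combined with Theorem 3 gives $\|\nabla\xi\| \leq c(n)(\|R^{h}-R\|_{\star} + \|R^{h}-R\|_{\star\star})$ with $c(n)$ polynomial in $n$. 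Substituting into $\widetilde E$, each summand is at most a polynomial-in-$n$ multiple of $k + \|R^{h}-R\|_{\star} + \|R^{h}-R\|_{\star\star}$, so $D\widetilde E \leq C\,K$ for a universal $C$, where $K = n^{2}(k+\|R^{h}-R\|_{\star}+\|R^{h}-R\|_{\star\star})d^{2}$ as in the statement.

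To finish, I would check that the function $\phi(y) = \frac{(1+\sqrt{1+4y})^{2}-y}{\exp(1+\sqrt{1+4y})}$ is monotone decreasing for $y \geq 0$: expanding the numerator as $2+2\sqrt{1+4y}+3y$, a short calculus computation shows that the exponential in the denominator outpaces the polynomial-plus-square-root growth of the numerator. Consequently, replacing $D\widetilde E$ by the upper bound $C K$ preserves the inequality. Dividing through by $2$ absorbs a factor into the prefactor, turning $\frac{1}{D} = \frac{1}{2nd^{2}}$ into $\frac{1}{4nd^{2}}$, and the claimed estimate follows. The simplified form results from the identity $(1+\sqrt{1+4CK})^{2} - CK = 2 + 2\sqrt{1+4CK} + 3CK$ and discarding the positive middle term. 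The main hazard in the argument is tracking the $n$-dependent constants carefully so that they fit inside the $n^{2}$ factor in $K$ and can be absorbed into a single universal $C$; beyond this the proof is a direct assembly of the three principal results of the paper and introduces no new analytic ideas.
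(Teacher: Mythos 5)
Your argument proves Theorem~4, not this conjecture, and the difference is the entire point of the conjecture. The statement asks for a constant $C$ depending ``only on the Riemannian geometry,'' meaning on the metric $g$, its dimension, diameter, Ricci curvature, and similar quantities that make no reference to the complex structure $J$. The norms $\| R^h - R \|_\star$ and $\| R^h - R \|_{\star\star}$ that you feed into Theorem~1 via Theorems~2 and~3 involve the Chern curvature $R^h$, which depends on $J$ as well as on $g$. A single Riemannian metric can carry many inequivalent orthogonal complex structures, each with its own Chern connection, torsion, and Hermitian curvature, so a bound in terms of $\| R^h - R \|_\star$ is precisely \emph{not} a bound in terms of the Riemannian geometry alone. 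What you have assembled is the proof of Theorem~4 (which the paper already gives, in essentially the same bookkeeping manner you describe), and Theorem~4 is an unconditional result because its hypotheses include the Hermitian data.

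The paper is explicit that it cannot close this gap. It records, in the same section, that ``we cannot hope to control the entire torsion tensor solely using the Riemannian curvature because there exist Riemannian flat metrics that do not have vanishing torsion,'' citing the Borisov--Salamon--Viaclovsky tori. This is exactly the obstruction to your substitution step: a flat metric has all Riemannian curvature invariants equal to zero, yet it may carry complex structures with nonzero torsion and nonzero $\| R^h - R \|_\star$, so no inequality of the form $\| R^h - R \|_\star \leq C(g)$ can hold. The paper reduces Conjecture~5 to the separate conjecture that the $C^1$ norm of $\eta$ is bounded by the Riemannian geometry, and it proves Conjecture~5 only in two special cases, by arguments quite different from yours: for globally conformally flat metrics (where conformal balancedness turns the drift into a gradient and a Harnack estimate controls the conformal factor), and for $k$-Gauduchon structures with $k > n/2$ (by a compactness argument contradicting the strong maximum principle). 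If you want to attack the conjecture, those special cases, not Theorem~4, are where the actual work lives.
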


Using Theorem 1, this conjecture would be a consequence of the following conjecture.

\begin{conjecture*}
Given a compact complex manifold $(M^n, h)$, there exists $C$ depending only on the Riemannian geometry such that the $C^1$ norm of $\| \eta \| \leq C$
\end{conjecture*}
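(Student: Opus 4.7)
The plan is to combine the structural inequalities of Section 3 with a compactness-and-contradiction argument over the moduli of orthogonal integrable almost-complex structures. By Theorem 2 one already has $\|\eta\|^2 \leq \|R^h-R\|_\star$ pointwise, and the analogous bound on $\|\nabla \eta\|$ follows from Theorem 3 applied to $T$ together with the identity $\eta_i=\sum_j T_{ij}^j$. So it suffices to bound $\|R^h-R\|_\star$ and $\|R^h-R\|_{\star\star}$ pointwise in terms of Riemannian data. The quantity $\|R^h-R\|_{\star\star}=\sum|R_{ijk\bar l}|^2$, expressed in any $J$-adapted unitary frame, is a component of the Riemannian curvature tensor and is pointwise controlled by $\|R\|_g^2$; similarly, the subsum $2\sum|R_{ij\bar k\bar l}|^2$ of $\|R^h-R\|_\star$ is purely Riemannian. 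The only genuinely Chern-dependent piece that must be bounded is $\sum|R^h_{i\bar j k\bar l}-R_{i\bar j k\bar l}|^2$.

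To control this last term I would argue by contradiction. Suppose there is a sequence of orthogonal integrable almost-complex structures $J_\nu$ on the fixed $(M,g)$ with $\mu_\nu:=\|\eta_{J_\nu}\|_{C^1}\to\infty$. Using Lemma 7 of \cite{YZ}, write $R^h_\nu-R$ as a polynomial of total degree two in $T_\nu$ and $\nabla T_\nu$ with linear Riemannian correction terms. Combined with Theorems 2 and 3 this produces a closed nonlinear inequality of the form $\|T_\nu\|_{C^1}^2\leq C(\|R\|_g,\|\nabla R\|_g)+Q(T_\nu,\nabla T_\nu)$ with $Q$ quadratic in the torsion and its first derivatives. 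The quadratic self-coupling is precisely what prevents a direct absorption argument. To break the coupling, rescale $T_\nu$ by $\mu_\nu$ and exploit compactness of the twistor fiber bundle $\mathcal{Z}(M,g)\to M$ (fibers $SO(2n)/U(n)$ are compact) to extract a $C^0$-convergent subsequence $J_\nu\to J_\infty$; integrability of $J_\infty$ passes to the limit via Newlander-Nirenberg and standard weak-convergence arguments.

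The principal obstacle is the resulting Liouville-type step: one must rule out the existence of an orthogonal integrable almost-complex structure on a bounded-curvature limit for which the normalized torsion has unit $C^1$ norm but arises from blown-up data. This is delicate because, as the Bryant-Salamon-Vanderwerf tori in \cite{BSV} show, even flat Riemannian metrics can carry nontrivial Chern torsion, so a naive vanishing statement is false. What is needed is a scale-invariant quantitative rigidity saying that the torsion of an orthogonal complex structure is controlled by the ambient Riemannian curvature with a universal constant; I expect this will require Gauduchon's classification of canonical Hermitian connections combined with a careful fiber-bundle analysis over $\mathcal{Z}(M,g)$. A softer complementary route, worth attempting first, is to integrate the relation $\eta+\bar\eta=-2\xi$ together with $\partial\omega^{n-1}=-2\eta\wedge\omega^{n-1}$, obtain an $L^2$ bound on $\eta$ through a Bochner identity and Stokes' theorem, and then upgrade to $C^1$ via Moser iteration for the Chern Laplacian applied to $\eta$ viewed as a section of $T^{*1,0}M$. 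Closing this iteration past the nonlinear coupling $\eta\leftrightarrow R^h$ at a single bootstrap step is the crux on which the whole argument hinges, and is where I would expect any complete proof to stand or fall.
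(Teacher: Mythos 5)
The statement you were asked to prove is labeled a \emph{conjecture} in the paper, and the paper offers no proof of it. The author explicitly writes that they ``have not been able to establish bounds on the full $C^1$-norm'' of $\eta$ from Riemannian data alone: what the paper does prove is only an $L^2$ bound on $\eta$ (via Gauduchon's identity $\bar\partial\partial\omega^{n-1}$ and Stokes), plus two restricted special cases (globally conformally flat metrics, and $k$-Gauduchon metrics with $k>n/2$). So there is no ``paper's own proof'' to compare your argument to, and any claimed proof would be a new theorem, not a reconstruction.

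Your proposal is not a proof either, and you say so yourself, flagging both the Liouville step and the Moser iteration closure as open. Beyond those acknowledged gaps there is a structural problem with the compactness step. Compactness of the twistor fiber $SO(2n)/U(n)$ gives only pointwise relative compactness of the \emph{values} of $J_\nu$, not compactness of $J_\nu$ as sections: with $\|T_\nu\|_{C^1}\to\infty$ the $J_\nu$ can oscillate on vanishing scales and fail to converge in any topology strong enough for the Nijenhuis tensor, and hence integrability, to pass to the limit; $C^0$ convergence does not suffice, and $C^1$ control is precisely what you lack. Likewise, rescaling $T_\nu$ by $\mu_\nu$ in the identities of Lemma~7 of \cite{YZ} simultaneously damps the linear Riemannian source term and the linear-in-$\nabla T$ term, leaving a degenerate limit equation with no coercivity to force a contradiction. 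Your cleanest contribution is the correct observation that $\|R^h-R\|_{\star\star}$ and the $R_{ij\bar k\bar l}$ block of $\|R^h-R\|_\star$ are controlled by $\|\mathrm{Riem}\|$ in any $J$-adapted frame, so the entire difficulty sits in the single Chern-dependent block $\sum|R^h_{i\bar jk\bar l}-R_{i\bar jk\bar l}|^2$, which is itself quadratic in $T$ and $\nabla T$ --- this correctly isolates the self-coupling that makes the conjecture hard and is consistent with the paper's own discussion of the obstruction (including the flat tori of \cite{BSV} with nonvanishing torsion), but it does not close the loop.
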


We suspect that the $C$ in the previous two conjectures can be expressed in terms of the dimension, the diameter and injectivity radius of $M$, an upper and lower bound of the Ricci curvature, the Riemannian curvature tensor, and a $C^1$ bound on the scalar curvature.
These may be more accessible in the conformally balanced or conformally K\"ahler case, in which case the analysis done on the Yamabe problem may be useful. We can also use the Riemannian geometry to bound a weaker norm of $\| \eta \|,$ but have not been able to establish bounds on the full $C^1$-norm. One strategy towards proving Conjecture 6 would be to weaken the norm on $\eta$ required to establish bounds on the eigenvalue. 

We use this result to reprove a result originally due to Gauduchon~\cite{PGa}.

\begin{theorem*}
Given a compact complex manifold $(M^n, h)$ and let $\| Riem \|$ be the pointwise norm of the Riemannian curvature tensor. Then the torsion one-form $\eta$ satisfies the following inequality:
$$ \| \eta \|^2_{L^2} \leq \frac{n^2}{4} \| Riem \|_{L^2}$$
\end{theorem*}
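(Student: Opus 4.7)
The plan is to integrate the pointwise identity from the proof of Theorem 2 and then invoke Gauduchon's classical scalar-curvature formula to eliminate the Chern-curvature contributions, leaving a bound by the Riemannian curvature alone. From the proof of Theorem 2 we have the pointwise identity
\[\|T\|^2 + 2\|\eta\|^2 = \sum_{i,j}\left[\tfrac{1}{2}(R^h_{i\bar i j\bar j} - R_{i\bar i j\bar j}) + \tfrac{1}{2}(R^h_{j\bar j i\bar i} - R_{j\bar j i\bar i}) - R_{ij\bar i\bar j}\right],\]
which expresses $\|\eta\|^2$ as a combination of Chern and Riemannian curvature traces. In particular, dropping the nonnegative $\|T\|^2$ on the left yields a pointwise estimate for $\|\eta\|^2$ in terms of these traces.

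Integrating this identity over $M$, the sum $\sum_{i,j} R^h_{i\bar i j\bar j}$ is a multiple of the Chern scalar curvature $s^{Ch}$, while $\sum_{i,j} R_{i\bar i j\bar j}$ recovers (up to a factor) the Riemannian scalar curvature $s$. Gauduchon's formula, derivable from the second Bianchi identity for the Chern connection together with integration by parts on closed $M$, expresses $\int_M s^{Ch}\,dV$ in terms of $\int_M s\,dV$ plus a universal positive multiple of $\int_M \|\eta\|^2\,dV$, with the divergence contributions vanishing since $M$ has no boundary. Substituting this back into the integrated identity cancels the bulk of the $\|\eta\|^2$ contributions on the right, and the remaining Riemannian term $\sum_{i,j}R_{ij\bar i\bar j}$ is controlled pointwise by $\|\mathrm{Riem}\|$. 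Applying Cauchy-Schwarz then converts the resulting $L^1$-type bound into $\|\mathrm{Riem}\|_{L^2}$, and careful tracking of the dimensional constants --- in particular, the factor $n-1$ arising from $|\eta_k|^2 = |\sum_i T^i_{ki}|^2 \leq (n-1)\sum_i |T^i_{ki}|^2$ combined with the coefficient $\tfrac{1}{2}$ from Gauduchon's formula --- should yield the constant $n^2/4$.

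The main obstacle will be pinning down the precise form of Gauduchon's identity with the correct coefficient. Because the Chern connection carries torsion, applying the second Bianchi identity introduces additional quadratic-in-$T$ terms that must be combined carefully with the $\|T\|^2$ term on the left of the pointwise identity rather than naively discarded; a sloppy count here would spoil the constant. The cleanest route is to first pass to the Gauduchon representative in the conformal class of $h$ (which exists uniquely up to scaling and satisfies $\partial\bar\partial\omega^{n-1} = 0$); this reduces the relevant divergences to a single clean expression and makes the comparison between $\int s^{Ch}$ and $\int s$ transparent, at which point obtaining the specific constant $n^2/4$ reduces to bookkeeping in unitary frames.
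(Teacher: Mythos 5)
The paper's proof is considerably more direct and does not use the Theorem~2 identity or any comparison of Chern and Riemannian scalar curvature. It starts instead from the \emph{third} equation of Lemma~7 of~\cite{YZ}, which expresses the purely Riemannian $(2,0),(0,2)$ curvature component $R_{ij\bar k\bar l}$ in terms of torsion and its Chern derivatives; no Chern curvature appears on either side. Setting $i=k$, $j=l$ and summing, the torsion quadratics collapse by the antisymmetry of $T$, leaving the clean pointwise identity $\sum R_{ij\overline{ij}} = \sum \eta_{i,\bar i} + |\eta|^2$. Separately, applying $\bar\partial$ to the defining relation $\partial\omega^{n-1} = -2\eta\wedge\omega^{n-1}$ gives $\bar\partial\partial\omega^{n-1} = -2\bar\partial\eta\wedge\omega^{n-1} - 4\eta\wedge\bar\eta\wedge\omega^{n-1}$, whence $\bar\partial\partial\omega^{n-1} = -2R_{ij\overline{ij}}\,\omega^n - 2|\eta|^2\,\omega^n$. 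Integrating over the closed manifold, the left side vanishes by Stokes for \emph{any} Hermitian metric --- no passage to the Gauduchon representative is needed --- and $\|\eta\|^2_{L^2} \le \|R_{ij\overline{ij}}\|_{L^1}$ drops out. The last step bounds the pointwise contraction $R_{ij\overline{ij}}$ by a dimensional multiple of $\|\mathrm{Riem}\|$.

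Your route is genuinely different, and as described it has gaps. First, $\sum_{i,j}R_{i\bar i j\bar j}$ is a particular partial contraction and is \emph{not} the Riemannian scalar curvature (nor a scalar multiple of it): the scalar curvature also involves $\sum_{i,j}R_{ij\bar i\bar j}$ and related pieces, so the "cancellation" you envision would not reduce to matching a single coefficient. Second, the version of Gauduchon's formula you invoke --- expressing $\int s^{Ch}$ in terms of $\int s$ plus a universal multiple of $\int|\eta|^2$ --- is essentially the identity one is trying to establish here (or a close relative of it), and you would need its precise torsion terms to avoid circularity; you have not pinned those down. Third, passing to the Gauduchon metric in the conformal class changes $\omega$, $\eta$, and the volume element simultaneously, so any estimate obtained there must be transported back to $h$, which reintroduces exactly the bookkeeping you hoped to avoid. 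The paper sidesteps all of this by working directly with the purely Riemannian formula from Lemma~7 and the fact that $\int_M\bar\partial\partial\omega^{n-1}=0$ on a closed manifold regardless of whether $\omega$ is Gauduchon.
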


\begin{proof}

Using the third formula in Lemma 7, if we set $i=k$ and $j=l$ and sum up over both we obtain the following:

$$2R_{ij \overline{ij}} =  T^j_{ij}{}_{, \bar i} - T^i_{ij}{}_{,\bar j} + 2T_{ij}^r \overline{T_{ij}^r} + T_{ri}^i \overline{T_{rj}^j}+T_{rj}^j \overline{T_{ri}^i}-T_{ri}^j \overline{T_{ri}^j}-T_{rj}^i \overline{T_{rj}^i}$$

This simplifies to
$$ R_{ij \overline{ij}} =  \eta_{i}{}_{, \bar i} + \eta_i \bar \eta_i $$
 where we sum over repeated indices.

We consider the following expression
\begin{eqnarray*}
\bar \partial \partial \omega^{n-1} & = & \bar \partial (-2 \eta \wedge \omega^{n-1})\\
& = & -2 \bar \partial \eta \wedge \omega^{n-1} - 4 \eta \wedge \bar \eta \wedge \omega^{n-1} \\
\end{eqnarray*}
 
Combining these two formulas, we obtain:

\begin{eqnarray*}
\bar \partial \partial \omega^{n-1} & = &  -2 (R_{ij \overline{ij}} - |\eta|^2) \omega^{n} - 4 \eta \wedge \bar \eta \wedge \omega^{n-1} \\
& = &  -2 R_{ij \overline{ij}} \omega^{n} - 2 |\eta|^2  \omega^{n} \\
\end{eqnarray*}

Integrating this formula, the left hand side is zero whereas the right hand side involves the curvature and the $L^2$ norm of $\eta$. 
Thus we have the following equation.

$$ \| \eta \|^2_{L^2} \leq \| R_{ij \overline{ij}} \|_{L^1}$$

Without knowing the complex structure, there is no way to take the sum on the right hand side. However, we can estimate the term.
$$|R_{ij \overline{ij}}|^2 \leq \frac{n^2}{4} \| Riem \|^2$$

Thus, we obtain the desired estimate. This is exactly equivalent to Gauduchon's result, so can be strengthened. Gauduchon's result states that 
$$ \delta \theta + |\theta|^2 = \frac{2n-2}{2n-1} S - 2 \langle W(\omega), \omega \rangle$$ Here, $\theta = -2 (\eta + \bar \eta)$, $\delta$ is the codifferential, $S$ is the scalar curvature, $W$ is the Weyl tensor if viewed as an endomorphism of $2$-forms and $\omega$ is the K\"ahler form. This immediately implies:

$$ \| \eta \|^2_{L^2} \leq \int_M \frac{1}{4}( \frac{2n-2}{2n-1} S + 2 |W|) $$

% \textit{ We can improve this estimate. \\ A straightforward manipulation shows that  $R_{ij \overline{ij}} = 2 R(i,j,Ji,Jj) - 2R(i,Jj, Ji,j)$. The term is zero unless $i \neq j$ and since $J$ is orthogonal to the metric $g(i,Ji) =g(j,Jj)=0$. Therefore, the first term only involves the Weyl curvature unless  $Ji = j$, in which case we have  $2R(i,Ji, Ji, -i) = 2R(i,Ji, i, Ji)$. If $i \neq j$, then the last term is either 0 or completely contained in the Weyl tensor. Therefore, the last term involves terms with the Weyl tensor as well as some that involve the sectional curvature $R(i,Ji,i,Ji)$. This implies that if a conformally flat metric has non-positive sectional curvature, then the only complex structures it admits are balanced. Can we do better? Is there a natural curvature condition that forces $\sum_i R(i,Ji,i,Ji) \leq 0$ for all orthonormal frames and choices of $J$? The result of Hernandez Lamoneda seems to imply that the entire sum is negative if the metric is quarter-pinched. Can we use the result of Gray to rewrite the sum as 4R(i,j,Ji,Jj) to make our lives easier?}

\end{proof}

\section{Some special cases}

We are able to prove Conjecture 5 in several special cases.

\subsection{Globally conformally flat metrics}
The first case is where $(M^n, g)$ is globally conformally flat.

\begin{theorem}
Let $(M^n, g)$ be a compact globally conformally flat Hermitian manifold. Let $K= \inf_{x \in M} Ric~ M$, $k= \sup_{x \in M} Ric~ M$ $R$ be the scalar curvature of $M$, $d$ be the diameter of $M$ and $i$ be the injectivity radius of $M$. If $\lambda_1$ is the principle eigenvalue of the complex Laplacian $\square$, then we have the following estimate:
$$\lambda \geq C(d, K, k, n, | \nabla R|, R^2, i)$$
\end{theorem}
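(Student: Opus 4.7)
The plan is to exploit the conformal flatness hypothesis via Gauduchon's formula
$$\delta\theta + |\theta|^2 = \tfrac{2n-2}{2n-1}R - 2\langle W(\omega),\omega\rangle,$$
which was the engine of the Gauduchon-type estimate on $\|\eta\|_{L^2}$ established just above. Since $M$ is globally conformally flat, the Weyl tensor $W$ vanishes identically, and the formula collapses to the purely Riemannian Ricatti-type equation
$$\delta\theta + |\theta|^2 = \tfrac{2n-2}{2n-1}R,$$
where $\theta=-2(\eta+\overline{\eta})=4\xi$ is four times the Lee form. The strategy is to bootstrap this scalar identity into pointwise $C^0$ and $C^1$ bounds on $\theta$, hence on $\xi$ and $\nabla\xi$, and then feed these bounds into Theorem 1 to obtain the lower bound on $\lambda$.

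Integrating the Ricatti identity over $M$ reproduces the $L^2$ estimate already in hand: $\|\theta\|_{L^2}^2\le C(n)\int_M R$, which is controlled in terms of $k$ and $\mathrm{Vol}(M)$, the latter bounded via Bishop--Gromov by $d,K,n$. To upgrade this to a pointwise bound I would run Moser iteration against the Ricatti identity, exploiting that the nonlinear term $|\theta|^2$ has a favorable sign and so closes the iteration. The Sobolev inequalities needed here have constants requiring the injectivity radius $i$ together with the two-sided Ricci bounds $K,k$, and the resulting $C^0$ estimate picks up its dependence on $R^2$ through the square of the source term. Differentiating the Ricatti identity once produces a companion equation for $\nabla\theta$ whose source contains $\nabla R$; a second round of Moser iteration (or a Schauder bootstrap built on the $C^0$ bound just obtained) then yields a $C^0$ estimate on $\nabla\theta$ in terms of $|\nabla R|$ and lower-order data. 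Because $M$ is conformally flat, the full Riemann tensor is algebraic in $\mathrm{Ric}$ and $g$, so every covariant derivative of curvature encountered when commuting derivatives on one-forms is itself controlled by $|\nabla R|$, $K$, and $k$.

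With $\|\xi\|_{C^0}$ and $\|\nabla\xi\|_{C^0}$ thus controlled entirely in terms of $d,K,k,n,|\nabla R|,R^2,i$, substituting these bounds into Theorem 1 produces the claimed lower bound on $\lambda$. The principal technical obstacle is executing the Moser iteration rigorously: the Ricatti identity is a first-order divergence equation for $\theta$ rather than a second-order elliptic equation for $|\theta|^2$, so closing the iteration requires commuting derivatives via a Bochner/Weitzenb\"ock identity for one-forms and absorbing the resulting curvature terms through the Ricci lower bound $K$. A subtler point is ensuring that the Sobolev constants extracted from $i,K,k$ are uniform along the iteration, so that the final pointwise estimate depends only on the quantities listed in the statement and not on further auxiliary geometric data.
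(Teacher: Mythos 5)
The paper's route is quite different from yours: it first observes that global conformal flatness forces the metric to be conformally \emph{balanced}, so the Lee form $\xi$ is exact, $\xi = \nabla\psi$, making $\square$ a Witten Laplacian. It then bounds the conformal factor directly — a Harnack inequality (Lemma 7, applied with $f=R$) gives $C^0$ control of $\log u$, and the conformal transformation law for Ricci, combined with vanishing of $\tilde R_{ij}$ for the flat target, converts those into pointwise $C^2$ bounds on $e^{-2f}$, which are then fed into Theorem 1.

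Your proposal bypasses the conformal factor entirely and works from Gauduchon's scalar identity $\delta\theta+|\theta|^2 = \tfrac{2n-2}{2n-1}R$ after killing $W$. The $L^2$ step is fine (and appears in the paper as the Gauduchon-type $\|\eta\|_{L^2}$ bound), but the upgrade to $C^0$ and $C^1$ has a real gap: Gauduchon's formula is a \emph{single scalar equation} constraining only $\delta\theta$, while $\theta$ is a one-form with many components. No amount of Moser iteration on a scalar divergence identity can control $\theta$ pointwise — the system is badly underdetermined. The ingredient that would make your strategy viable is exactly the one the paper uses and you omit: in the conformally flat case $\theta$ is \emph{closed} (indeed exact), so $\Delta_H\theta = d\delta\theta = d\bigl(\tfrac{2n-2}{2n-1}R - |\theta|^2\bigr)$, and a Bochner--Weitzenb\"ock formula for the closed one-form $\theta$ does produce a genuine second-order differential inequality for $|\theta|^2$. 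Even then, the cubic term arising from $\langle d|\theta|^2,\theta\rangle$ has the wrong sign for a naive Moser iteration to ``close''; a maximum-principle argument (using $|\nabla\theta|^2 \ge \tfrac1n(\delta\theta)^2$ to recover a good quartic term) is the natural way to proceed, and that is again closer in spirit to what the paper does with its Harnack/gradient estimate. Finally, your claim that every curvature derivative encountered is controlled by $|\nabla R|$ is too optimistic: conformal flatness makes $\mathrm{Riem}$ algebraic in $\mathrm{Ric}$ and $g$, but $\nabla\mathrm{Riem}$ then requires $\nabla\mathrm{Ric}$, not merely $\nabla R$.

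In short: you have a genuinely different starting point, but as written it cannot yield pointwise bounds. Make the exactness/closedness of $\theta$ explicit, replace Moser iteration by a Bochner-plus-maximum-principle argument on $|\theta|^2$ (or simply convert the exact one-form into a scalar equation $\Delta v = hv$ for $v=e^{-\psi}$ and invoke Harnack, which is essentially the paper's Lemma 7), and track which curvature derivatives you actually need.
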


In order to do this, we note that $\square = \frac12 \Delta + \xi( \nabla)$ where $\xi$ is the Lie form. Furthermore, we note that if $(M^n, g)$ is conformally flat ( where $e^{2f} g$ is flat), then it is also conformally balanced. As we observed that the Lie form is gradient if a metric is conformally balanced, this means that the complex Laplacian is actually a Witten-Laplacian (modulo a factor of $2$) with drift $\nabla e^{-2f}$. In this case, if one has $C^2$ bounds on $e^{-2f}$, one can often obtain lower bounds on the spectrum. However, we will use the bound from this preprint instead of the standard result using the Bakry-Ricci curvature. The reason to do this is that the bound using Bakry-Ricci curvature can become trivial if the curvature is too negative. Our bound is often much weaker when both apply, but does not display this type of threshold behavior. Since we will need to use a Harnack type result to estimate the $C^0$ bounds on $f$ on the manifold and then the conformal structure to obtain $C^2$ bounds, we obtain weak bounds that interact poorly with cutoff thresholds. 

We start by proving a Harnack inequality for $(M^n, g)$ for the linear equation $\Delta u = fu$. This proof is completely standard ~\cite{SY} and we provide it in detail only to note explicitly what quantities are used for the estimate.

\subsection{A Harnack inequality}

\begin{lemma}
Suppose $u$ is a positive function satisfying $\Delta u = fu$ on $(M^n, g)$. Then $\inf u \leq C_1 \sup u$ where $C_1$ depends only on $Ric
~ M, n, diam, i, f^2,$ and $|\nabla f|$.
\end{lemma}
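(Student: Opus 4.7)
The plan is to derive a gradient estimate for $v = \log u$ via a Bochner-style maximum principle argument, then integrate along a minimizing geodesic to convert it into a multiplicative Harnack inequality. Since $u$ is strictly positive and $M$ is compact, $v$ is a smooth bounded function, and a direct computation gives
\begin{equation*}
\Delta v \;=\; \frac{\Delta u}{u} - |\nabla v|^2 \;=\; f - |\nabla v|^2.
\end{equation*}

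First I would apply the Bochner formula to $|\nabla v|^2$ and use $\text{Ric} \geq -(n-1)k$ together with $|\text{Hess}\,v|^2 \geq (\Delta v)^2/n = (f-|\nabla v|^2)^2/n$ to obtain
\begin{equation*}
\tfrac{1}{2}\Delta |\nabla v|^2 \;\geq\; \tfrac{1}{n}\bigl(f - |\nabla v|^2\bigr)^2 + \langle \nabla f, \nabla v\rangle - \langle \nabla |\nabla v|^2, \nabla v\rangle - (n-1)k\,|\nabla v|^2.
\end{equation*}
Evaluating at an interior maximum point $x_0$ of $|\nabla v|^2$ (which exists by compactness of $M$) kills $\nabla|\nabla v|^2$ and forces the left side to be $\leq 0$. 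Writing $w = |\nabla v|^2(x_0)$ and using the elementary bounds $(f-w)^2 \geq w^2/2 - f^2$ and $|\langle \nabla f, \nabla v\rangle| \leq |\nabla f|\sqrt{w}$, this yields a one-variable polynomial inequality of the form
\begin{equation*}
\tfrac{1}{2n}\,w^2 \;\leq\; \|f\|_\infty^2/n + \|\nabla f\|_\infty \sqrt{w} + (n-1)k\, w,
\end{equation*}
from which I can conclude $w \leq C_0$ for some $C_0$ depending only on $n$, $k$, $\|f\|_\infty^2$, and $\|\nabla f\|_\infty$. Since $w$ is the pointwise maximum of $|\nabla v|^2$, this gives a global gradient bound $|\nabla v| \leq \sqrt{C_0}$ on $M$.

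Second, I would pick $p,q \in M$ realizing $\sup u$ and $\inf u$, join them by a minimizing geodesic $\gamma$ of length at most $d$, and integrate:
\begin{equation*}
\log\frac{\sup u}{\inf u} \;=\; v(p) - v(q) \;\leq\; \int_\gamma |\nabla v|\,ds \;\leq\; \sqrt{C_0}\,d.
\end{equation*}
Exponentiating produces $\inf u \geq e^{-\sqrt{C_0}\,d}\,\sup u$, which is the desired Harnack-type inequality with constant $C_1 = e^{-\sqrt{C_0}\,d}$.

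I expect the main bookkeeping issue to be tracking precisely which norms of $f$ appear in the final constant, so that the statement's dependence on $\|f\|_\infty^2$ and $\|\nabla f\|_\infty$ (rather than, say, $\|f\|_\infty$) is visible. The injectivity radius $i$ does not enter my argument directly; I suspect the author invokes it because their intended approach uses a local Sobolev inequality and a covering/Moser-iteration flavored estimate, but the Li--Yau route above avoids it while still yielding a constant of the stated type (with $i$ simply not appearing, which is consistent with $C_1 = C_1(d, K, k, n, |\nabla f|, f^2, i)$ since $C_1$ is allowed but not required to depend on $i$). The genuine technical obstacle is just verifying the two elementary inequalities used to linearize the $(f-w)^2$ term, which is routine.
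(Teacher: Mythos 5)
Your proof is correct and follows the same Li--Yau gradient-estimate route as the paper: both derive a pointwise bound on $|\nabla \log u|$ via a Bochner/maximum-principle argument applied at an interior maximum, then integrate along a minimizing geodesic to get the multiplicative Harnack inequality; the only real difference is that you work with $v = \log u$ directly (so $\Delta v = f - |\nabla v|^2$ and everything is expressed in $|\nabla v|^2$) whereas the paper tracks $\phi = |\nabla u|/u = |\nabla v|$ and carries the extra $Bu^2$ terms through, refining the Hessian estimate with the $\sum u_{ij}^2 - \sum u_{1j}^2$ trick rather than your cruder but adequate $|\mathrm{Hess}\,v|^2 \geq (\Delta v)^2/n$. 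You are also correct that the injectivity radius never actually enters the paper's proof either.
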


Pick an orthonormal frame at a point which satisfies the same conditions as the one before.

\begin{eqnarray*} 
\frac12 \Delta( |\nabla u|^2) &=& \sum u_{ij}^2 + \sum u_i (\Delta u)_i + Ric (\nabla u, \nabla u) \\
&= &  \sum u_{ij}^2 + \sum u_i (f u)_i + Ric (\nabla u, \nabla u) \\
&\geq &  \sum u_{ij}^2  + \sum f |u_i|^2  + \sum u_i u f_i - K |\nabla u|^2 \\
&\geq &  \sum u_{ij}^2  - \frac12(|\nabla u|^2 + u^2)  |\nabla f| - (K-f) |\nabla u|^2 \\
\end{eqnarray*}

Set $A = (K- f + \frac12 |\nabla f| )$. Then we have 
\begin{eqnarray*} 
\frac12 \Delta( |\nabla u|^2) &\geq& \sum u_{ij}^2 - A |\nabla u|^2 - \frac12 |\nabla f|  u^2 
\end{eqnarray*}

We pick normal coordinates at $x_0$ so that $u_i = 0$ for $i> 1$ and $u_1 = |\nabla u|$
$$\nabla_j (|\nabla u|) = \nabla_j \sqrt{\sum u_i^2} = \frac{ \sum u_i u_{ij}}{|\nabla u|} = u_{1j}$$

So,
$$|\nabla (|\nabla u|)|^2 = \sum u_{1j}^2$$

$$|\Delta (|\nabla u|)^2| =2 |\nabla u| \Delta (|\nabla u|)+ 2 |\nabla (|\nabla u|)|^2 $$

\begin{eqnarray*} 
 |\nabla u| \Delta (|\nabla u|)+ \sum u_{1j}^2 &\geq& \sum u_{ij}^2 - A |\nabla u|^2 - \frac12 |\nabla f|  u^2 
\end{eqnarray*}

This implies
\begin{eqnarray*} 
 |\nabla u| \Delta (|\nabla u|)+ \sum u_{1j}^2 &\geq& \sum u_{ij}^2 - A |\nabla u|^2 - \frac12  |\nabla f|  u^2 
\end{eqnarray*}

Thus,
\begin{eqnarray*} 
 |\nabla u| \Delta (|\nabla u|) +  A |\nabla u|^2 +  \frac12 |\nabla f|  u^2 &\geq& \sum u_{ij}^2 - \sum u_{1j}^2 \\
&\geq& \sum_{i \neq 1} u_{i1}^2 + \sum_{i \neq 1} u_{ii}^2 \\
&\geq& \sum_{i \neq 1} u_{i1}^2 + \frac{1}{n-1} \left( \sum_{i \neq 1} u_{ii} \right)^2 \\
\end{eqnarray*}

$$\Delta u = \sum u_{ii} = fu$$

Thus, $f u - u_{11} = \sum u_{ii}$

\begin{eqnarray*} 
 ( \sum_{i \neq 1} u_{ii})^2 &=& (f u - u_{11})^2 \\
&\geq& \frac12 u^2_{11}  - f^2 u^2\\
\end{eqnarray*}

Thus, 
\begin{eqnarray*} 
 |\nabla u| \Delta (|\nabla u|) +  A |\nabla u|^2 +  (\frac12 +f^2)u^2 &\geq& \frac{1}{2(n-1)} |\nabla (|\nabla u|)|^2 \\
\end{eqnarray*}

Set $B = (\frac12 |\nabla f| +f^2)$

Then, 
\begin{eqnarray*} 
 |\nabla u| \Delta (|\nabla u|) +  A |\nabla u|^2 +  Bu^2 &\geq& \frac{1}{2(n-1)} |\nabla (|\nabla u|)|^2 \\
\end{eqnarray*}

Then let $\phi = \dfrac{|\nabla u|}{u}$

$$\nabla \phi = \frac{\nabla | \nabla u|}{u} - \frac{\nabla u | \nabla u|}{u^2}$$

$$ | \nabla u| = \phi u$$

\begin{eqnarray*} 
 \Delta (|\nabla u|) &=& u \Delta \phi + \phi \Delta u + 2 \nabla \phi \cdot \nabla u
\end{eqnarray*}

\begin{eqnarray*} 
 \Delta (\phi u) &=& u \Delta \phi + \phi \Delta u + 2 \nabla \phi \cdot \nabla u \\
&=& u \Delta \phi + \phi f u + 2 \nabla \phi \cdot \nabla u \\
&=& u \Delta \phi + |\nabla u| f + 2 \nabla \phi \cdot \nabla u \\
\end{eqnarray*}

Thus, 

\begin{eqnarray*} 
 \Delta (\phi) &=& \frac{ \Delta (|\nabla u|)}{u} - \frac{2 \nabla \phi \cdot \nabla u}{u} - f \phi \\
&=& \frac{ |\nabla u| \Delta (|\nabla u|)}{|\nabla u| u} - \frac{2 \nabla \phi \cdot \nabla u}{u} - f \phi \\
&\geq& \frac{1}{|\nabla u| u} \left( \frac{1}{2(n-1)} |\nabla (|\nabla u|)|^2 - A |\nabla u|^2 -  Bu^2   \right) - \frac{2 \nabla \phi \cdot \nabla u}{u} - f \phi \\
&=& \frac{1}{|\nabla u| u} \frac{1}{2(n-1)} |\nabla (|\nabla u|)|^2 - A \phi -  \frac{B}{\phi}   - \frac{2 \nabla \phi \cdot \nabla u}{u} - f \phi \\
\end{eqnarray*}
Recalling that $A = (K- f + \frac12)$, set $\alpha = (K+ \frac12)$ to get
\begin{eqnarray*} 
 \Delta (\phi) &\geq& \frac{1}{|\nabla u| u} \frac{1}{2(n-1)} |\nabla (|\nabla u|)|^2 - \alpha \phi -  \frac{B}{\phi}   - \frac{2 \nabla \phi \cdot \nabla u}{u}  \\
\end{eqnarray*}

Let $\epsilon = \frac{1}{n-1} >0$

\begin{eqnarray*} 
\frac{2 \nabla \phi \cdot \nabla u}{u} &=& (2-\epsilon)\frac{\nabla \phi \cdot \nabla u}{u}+\frac{\epsilon \nabla \phi \cdot \nabla u}{u} \\
&=& (2-\epsilon)\frac{\nabla \phi \cdot \nabla u}{u}+ \epsilon \frac{ |\nabla (|\nabla u|)| | \nabla u|}{u} -  \epsilon \frac{ | \nabla u|^3}{u^3} \\
\end{eqnarray*}

Then  
\begin{eqnarray*} 
\epsilon \frac{ |\nabla (|\nabla u|)| | \nabla u|}{u} & = & \epsilon \frac{ |\nabla (|\nabla u|)| }{ (| \nabla u| u)^{1/2}} \frac{ | \nabla u|^{3/2}}{u^{3/2}} \\
& \leq & \frac{\epsilon}{2}\left( \frac{ |\nabla (|\nabla u|)|^2 }{ (| \nabla u| u)} + \frac{ | \nabla u|^{3}}{u^{3}} \right)\\
& = & \frac{1}{2(n-1)}\left( \frac{ |\nabla (|\nabla u|)|^2 }{ (| \nabla u| u)} +\phi^3 \right)\\
\end{eqnarray*}

Therefore,
\begin{eqnarray*} 
\Delta (\phi)& \geq & -\alpha \phi - \frac{B}{\phi}+ \frac{1}{2(n-1)} \phi^3 +  (2-\epsilon)\frac{\nabla \phi \cdot \nabla u}{u} \\
\end{eqnarray*}

Consider the maximum point of $\phi$. At this point, $\nabla \phi =0$ and  $\Delta (\phi) \leq 0$ and so multiplying through by $\phi$, we obtain

\begin{eqnarray*} 
0& \geq & -\alpha \phi^2 - B+ \frac{1}{2(n-1)} \phi^4 +  (2-\epsilon)\frac{\nabla \phi \cdot \nabla u \phi}{u} \\
0& = & -\alpha \phi^2 - B+ \frac{1}{2(n-1)} \phi^4 \\
\end{eqnarray*}

Then, we find that
$$\phi^2 \leq (n-1)\left( B+\sqrt{B^2 + 2\frac{ \alpha}{n-1}} \right) $$

This is what we need to bound in order to get a Harnack inequality. We can integrate this along the geodesics to get upper and lower bounds on $\log u$ using the diameter of $M$.

\subsection{Conformally Flat Geometry}

 If we set $f =R$, this solves the Yamabe problem in the flat case. If we force the volume to be preserved, we can integrate the result from the lemma to get upper and lower bounds on $u$ using the diameter (since we know that $\sup u > 1$ and $\inf u < 1$ in that case). More directly, this gives us bounds on $\nabla \log u$.

Now, let $f = \frac{2}{n-2} \log u$.

Then the conformal formula for Ricci curvature shows us that

$$\tilde R_{ij} = R_{ij} - (n-2)\left[ \nabla_i\partial_j f - (\partial_i f)(\partial_j f) \right] + \left( \triangle f - (n-2)\|\nabla f\|^2 \right)g_{ij} $$
$$\tilde R  = e^{-2f}\left(R + 2(n-1)\triangle f - (n-2)(n-1)\|\nabla f\|^2\right) $$

Since $\tilde g$ is flat, we have

\begin{eqnarray*}
0 & = & \tilde R_{ij}  - \frac{\tilde R}{2(n-1)}\tilde g_{ij} \\
& = &  R_{ij} - (n-2)\left[ \nabla_i\partial_j f - (\partial_i f)(\partial_j f) \right] + \left( \triangle f - (n-2)\|\nabla f\|^2 \right)g_{ij} \\
&   &  - \left( \frac{R}{2(n-1)} + \triangle f - \frac{(n-2)}{2}\|\nabla f\|^2\right) g_{ij} \\
& = &  R_{ij} - \frac{R}{2(n-1)}  g_{ij} - (n-2)\left[ \nabla_i\partial_j f - (\partial_i f)(\partial_j f) \right] - (n-2)\|\nabla f\|^2g_{ij} + \frac{(n-2)}{2}\|\nabla f\|^2g_{ij}
\end{eqnarray*}

Thus we have 

\begin{eqnarray*}
\|- (n-2) \nabla_i \partial_j f \|  &= & \|R_{ij} - \frac{R}{2(n-1)}  g_{ij} + (n-2)(\partial_i f)(\partial_j f)  - \frac{(n-2)}{2}\|\nabla f\|^2g_{ij} \| \\
&\leq & \|R_{ij} - \frac{R}{2(n-1)}  g_{ij} \| + \| (n-2)(\partial_i f)(\partial_j f)  - \frac{(n-2)}{2}\|\nabla f\|^2g_{ij} \| \\
\end{eqnarray*}

Setting $i=j$ (this is what we need in the proof of the lower bound on $\lambda$), and working in normal coordinates, we have

\begin{eqnarray*}
\|- (n-2) \nabla_i \partial_i f \| &\leq & \|R_{ii} - \frac{R}{2(n-1)} \| + \| (n-2)(\partial_i f)^2 \|  + \| \frac{(n-2)}{2}\|\nabla f\|^2\| \\
&\leq & \|R_{ii} - \frac{R}{2(n-1)} \|  + \| \frac{3(n-2)}{2}\|\nabla f\|^2\|
\end{eqnarray*}

The Harnack estimate provides bounds on the last term. Then the bounds on the Ricci and scalar curvature give us bounds on the first term on the right hand side. Now we have the needed $C^2$ bounds on $\log u$. However, in order to get $C^2$ bounds on $e^{-2f}$, we note that 

$$ \nabla_i e^{-2f} =  -2 (\nabla_i f) e^{-2f} $$
$$ \nabla_i \nabla_i e^{-2f} = 4 (\nabla_i f)^2 e^{-2f} - 2 (\nabla_i \nabla_i f) e^{-2f} $$

Thus, with bounds on $|\nabla_X \nabla_X f|$ , $|\nabla_X f|$, and upper and lower bounds on $e^{-2f}$, we obtain  $C^2$ bounds on $e^{-2f}$ in terms of $d, K, k, n, |\nabla R|$, and $R^2$. We can feed this into the theorem from the preprint to get lower bounds on $\lambda$ where $\lambda$ is the principal eigenvalue of the complex Laplacian.

Note that from the perspective of differential geometry and differential topology, globally conformally flat metrics are well understood. However, such metrics admit a rich moduli of orthogonal complex structures ~\cite{KYZ}, many of which are non-K\"ahler. Without such a result, these metrics form a large obstruction to Conjecture 5. This also illustrates one possible approach to proving the conjecture in greater generality. One would start by proving that all complex structures orthogonal to a particular metric in a conformal class satisfy the estimate and then study how the Ricci curvature is affected by conformal transformations of the metric.

The above argument proves the conformal stability part of the result in the conformally Ricci-flat case. This shows is that any estimate on the torsion one-form is conformally stable, in that if one can obtain a $C^1$ estimate on the torsion one-form of a Ricci-flat metric, then one can obtain a $C^1$ estimate on the torsion one form for any conformal deformation of that metric. For non-flat metrics, we only have an $L^2$ estimate on the torsion one-form for all complex structure so this is not yet strong enough to finish the proof.

\subsection{$k$-Gauduchon metrics}

We are able to establish lower bounds on the spectrum in one other case. The bound is given by an argument by contradiction, so is not effective, but we will present a numeric lower bound in a future preprint.

Let $G$ be the Gauduchon curvature. That is 
$$G = \frac{2n-2}{2n-1}S - \langle W(\omega), \omega \rangle$$

Consider 
$$\alpha_k = i \partial \bar \partial (\omega^k) \wedge \omega^{n-k-1}$$

Using a slight generalization of the calculation in ~\cite{MT}, we see that

$$\alpha_k =  \frac{2k \omega^n}{n(n-1)(n-2)}  \left( -(n-2) \eta_i,_{\bar i} + (2k-1)|\eta|^2 + (n-k-1)|T|^2 \right) $$

Substituting in for the Gauduchon curvature, we obtain:
$$\alpha_k =  \frac{2k \omega^n}{n(n-1)(n-2)} \left( -(n-2)G + (2k-n)|\eta|^2 + (n-k-1)|T|^2 \right) $$

If the metric is $k$-Gauduchon, then $0 = i \partial \bar \partial (\omega^k) \wedge \omega^{n-k-1}$. For $k>\frac{n}{2}$, then 
$$(n-2)G = (2k-n) |\eta|^2 + (n-k-1)|T|^2$$
Since $(2k-n), (n-k-1) >0$, this gives us pointwise bounds on torsion in terms of $G$ alone.

\section{A lower bound on the spectrum}
 \begin{theorem}Let $(M^{2n}, g)$ be a compact Riemannian manifold and $J$ be an orthogonal complex structure which is $k$-Gauduchon for some $k>\frac{n}{2}$. Then the spectrum of the complex Laplacian is bounded below by some constant $C$ depending only on $(M^{2n}, g)$, independent of $J$.
 \end{theorem}
 
 The complex Laplacian on a function can be written as $\frac12 \Delta u + (2\eta +2 \bar \eta)(\nabla u)$
 
 Suppose there were a sequence of complex structures such that
 $\frac12 \Delta u_i + (2\eta_i +2 \bar \eta_i)(\nabla u_i) = \lambda_i u_i$ with $\lambda_i \to 0$ where $\sup |u_i| = 1$ for all $ i$ and $\int u_i~ dVol=0$.
 
Since $\eta_i$ is bounded in $L^\infty$, it converges in weak $L^p$ to $\eta_\infty$, after possibly passing to a subsequence. Similarly, since $u_i$ is uniformly bounded in $H^{2,p}$ for large $p$, it converges weakly to $u_\infty$ after passing to a further subsequence. Therefore, $\frac12 \Delta u_i$ converges in weak $L^2$ to $\frac12 \Delta u_\infty$. Furthermore, $\nabla u_i$ converges to $\nabla u_\infty$ strongly and $\lambda_i u_i$ converges to 0 uniformly.
 
 Thus we have,
 $\frac12 \Delta u_\infty + (2\eta_\infty +2 \bar \eta_\infty)(\nabla u_\infty) = 0$ for $u_\infty$ non-constant (since $\sup |u_\infty| = 1$ and $\int u_\infty~ dVol=0$). This contradicts the strong maximum principle.
 
 Since every complex structure is Gauduchon for one metric in the conformal class, if one were able to obtain a $C^2$ estimate on the $\log$ of the conformal factor that makes the K\"ahler form Gauduchon in terms of the Riemannian geometry, this would prove Conjecture 5 in full generality.
 
 This result does not give an explicit lower bound in terms of the geometry. In order to do that, one would need to prove a version of the Faber-Krahn inequality with drift. Such a result is known in flat Euclidean space ~\cite{HNR}, and we will prove a numeric result in a following paper.


\begin{thebibliography}{99}



\bibitem{AN} Andrews, B., \& Ni, L. (2012). Eigenvalue Comparison on Bakry-Emery Manifolds. Communications in Partial Differential Equations, 37(11), 2081-2092.

\bibitem{BSV} Borisov, L., Salamon, S., \& Viaclovsky, J. (2011). Twistor geometry and warped product orthogonal complex structures. Duke Mathematical Journal, 156(1), 125-166.

\bibitem{MC} Chen, M. (2011). General estimate of the first eigenvalue on manifolds. Frontiers of Mathematics in China Front. Math. China, 6(6), 1025-1043.

\bibitem{CJQ} Chen, Q., Jost, J., \& Qiu, H. (2012). Existence and Liouville theorems for V -harmonic maps from complete manifolds. Ann Glob Anal Geom Annals of Global Analysis and Geometry, 565-584.

\bibitem{HD} Donnelly, H. (1975). A Spectral Condition Determining the Kaehler Property. Proceedings of the American Mathematical Society, 47.


\bibitem{FLL} Futaki, A., Li, H., \& Li, X. (2012). On the first eigenvalue of the Witten-Laplacian and the diameter of compact shrinking solitons. Ann Glob Anal Geom Annals of Global Analysis and Geometry, 44(2), 105-114.

\bibitem{PGa} Gauduchon, P. (1995) Structures complexes sur une variet\'e conforme de type negative, Complex Analysis and Geometry, Lect. Notes Pure Appl. Math. 173, Marcel Dekker.


\bibitem{PG} Gilkey, P. (1974). Spectral geometry and the Kaehler condition for complex manifolds. Inventiones Mathematicae Invent Math, 26(3), 231-258.

\bibitem{GN}  Gonzalez, B. \& Negrin, E. (1999). Gradient Estimates for Positive Solutions of the Laplacian with Drift Proc. Amer. Math. Soc, 127(2) 619-625.

\bibitem{HNR} Hamel, F., Nadirashvili, N., \& Russ, E. (January 01, 2005). An isoperimetric inequality for the principal eigenvalue of the Laplacian with drift. Comptes Rendus - Math�matique, 340(5) 347-352.

\bibitem{KYZ} Khan, G., Yang, B., \& Zheng, F. (April 2016) The Set of all Orthogonal Complex Structures on the Flat 6-tori. Arxiv preprint. https://arxiv.org/abs/1604.05745

\bibitem{LH} Hern�ndez-lamoneda, L. (February 01, 2000). Curvature vs. Almost Hermitian Structures. Geometriae Dedicata, 79(2), 205-218.

\bibitem{JL} Ling, J. (January 01, 2007). An exact solution to an equation and the first eigenvalue of a compact manifold. Illinois J. Math, 51(3), 853-860.

\bibitem{LZ} Ling, J., \& Lu, Z. (2010). Bounds of eigenvalues on Riemannian manifolds. ALM, 10, 241-264.

\bibitem{MT} Matsuo, K., \& Takahashi, T. (January 2001). On compact astheno-K\"ahler manifolds Colloquium Mathematicum 89(2):213-221 

\bibitem{JP} Park, J. (2003). Spectral geometry and the Kaehler condition for Hermitian manifolds with boundary. Recent Advances in Riemannian and Lorentzian Geometries Contemporary Mathematics, 121-128.

\bibitem{SMS} Salamon, S. M. (1995) "Orthogonal complex structures." Proceedings of the 6th International Conference on Differential Geometry, Brno. 

\bibitem{ST} Streets J. \& Tian, G. (2010) A parabolic flow of pluriclosed metrics, Int. Math. Res. Notices , Vol. 2010, 3101-3133.

\bibitem{ST2} Streets J. \& Tian, G. (2013) Regularity results for pluriclosed flow, Geom. Topol. 17, no. 4, 2389–2429, MR 3110582, Zbl 1272.32022.

\bibitem{SY} Schoen, R., \& Yau, S. (1994). Lectures on differential geometry. Cambridge, MA: International Press.


\bibitem{NS} Stanton, N. (1984). The heat equation in several complex variables. Bull. Amer. Math. Soc. Bulletin of the American Mathematical Society, 65-85.


\bibitem{YZ} B. Yang \& F. Zheng, (2016) On Curvature Tensors of Hermitian Manifolds. Arxiv preprint, http://arxiv.org/abs/1602.01189
\end{thebibliography}
\end{document}